\makeatletter\@addtoreset{equation}{section}
\newtheorem{theorem}{Theorem}[section]
\newtheorem{corollary}[theorem]{Corollary}
\newtheorem{lemma}[theorem]{Lemma}
\newtheorem{proposition}[theorem]{Proposition}
\theoremstyle{remark}
\newtheorem{example}[theorem]{Example}
\newcommand{\C}{\mathbb{C}}
\newcommand{\R}{\mathbb{R}}
\newcommand{\N}{\mathbb{N}}
\newcommand{\lh}{{\mathcal B}(H)}
\renewcommand{\span}{{\rm Span}}
\newcommand{\ran}{{\rm Ran }  }
\renewcommand{\ker}{{\rm Ker }  }
\newcommand{\gsh}{\mathcal{GS}(H)}
\newcommand{\sh}{\mathcal{S}(H)}
\newcommand{\ess}{{ \rm M} }
\title[$C$-normality of rank-one perturbations of normal operators]
{$C$-normality of rank-one perturbations\\ of normal operators}
\author{Zouheir Amara}
\address{Department of Mathematics, Labo LIABM, FSO, Mohammed First University, 60000 Oujda, Morocco}
\email{z.amara@ump.ac.ma}
\author{Mourad Oudghiri}
\address{Department of Mathematics, Labo LIABM, FSO, Mohammed First University, 60000 Oujda, Morocco}
\email{m.oudghiri@ump.ac.ma}
\subjclass[2020]{47B15, 47A55}
\keywords{$C$-normal operators, perturbations, normal operators, complex symmetric operators}
\begin{document}

\maketitle

\begin{abstract}
For a separable complex Hilbert space $H$, we say that a bounded linear operator $T$ acting on $H$ is $C$-normal, where $C$ is a conjugation on $H$, if it satisfies $CT^*TC=TT^*$. For a normal operator, we give geometric conditions which guarantee that its rank-one perturbation is a $C$-normal for some conjugation $C$. We also obtain some new properties revealing the structure of $C$-normal operators.
\end{abstract}

\section{Introduction}

Denote by $H$ a separable complex Hilbert space of dimension greater than two, and by $\lh$ the algebra of all bounded linear operators acting on $H$. A conjugate-linear operator $C$ on $H$ is said to be {\it conjugation} if it satisfies the conditions:
\begin{enumerate}[(i)]
 \item $C$ is isometric : $\langle Cx, Cy\rangle = \langle y, x\rangle$ for all $x, y\in H$,
 \item $C$ is involutive : $C^2 = I$.
\end{enumerate}
We say that an operator $A\in\lh$ is {\it $C$-symmetric} if $A = CA^*C$, and it is called {\it complex symmetric} if it is $C$-symmetric for some conjugation $C$ on $H$. Complex symmetric operators are exactly those that have a symmetric (i.e. self-transpose) matrix representation relative to some orthonormal basis, see \cite{GaPu.symmI}.
Their general study was initiated by Garcia, Putinar, and Wogen, see \cite{GaPu.symmI,GaPu.symmII,gar-wog1,Ga.some}. The class of complex symmetric operators is large. It contains normal operators, bi-normal operators, quadratic operator, Hankel operators, truncated Toeplitz operators and many standard integral operators such as the Volterra integration operator, see \cite{Ga.some}.

Following \cite{wicher}, an operator $A\in\lh$ is said to be {\it $C$-normal} if 
$$CA^*AC=AA^*,$$ 
or equivalently, if $C\vert A\vert C=\vert A^*\vert$ where $\vert A\vert:=\sqrt{A^*A}$. Clearly, if $A$ is $C$-normal then so is $A^*$, and every $C$-symmetric operator is $C$-normal.

Let us fix the following notations:
$$
\sh=\left\lbrace A\in\lh : \mbox{$\exists$ a conjugation $C$ on $H$ such that $CAC=A^*$}\right\rbrace
$$
and
$$
\gsh=\left\lbrace A\in\lh : \mbox{$\exists$ a conjugation $C$ on $H$ such that $CA^*AC=AA^*$}\right\rbrace.
$$

It is obvious that $\sh\subset\gsh$, but, as will be shown by Example \ref{ExampleIntro}, in general this inclusion is strict. However, it is proved in \cite{w-2z} that $C$-normality and $C$-symmetry coincide on a dense class of operators in $\lh$.

In terms of matrices, $\gsh$ is the set of all operators $A$ such that the matrices of $A^*A$ and $AA^*$ are transposed relative to some orthonormal basis. In fact, this can easily be obtained using the fact that each conjugation $C$ has a fixed orthonormal basis $\lbrace e_i\rbrace$; i.e. $Ce_i=e_i$ for every $i\geq 1$ (see \cite{GaPu.symmI}). 

For more details about the structure and the properties of $C$-normal operators, the reader is referred to \cite{Cnor1,wicher,Cnor2,w-2z}.

In this paper, we provide geometric conditions on a normal operator and a rank-one operator so that their sum is a $C$-normal for some conjugation $C$. This shows that if $N\in\lh$ is normal, then $N+R\in\gsh$ for a large class of rank-one operators $R$. We derive an example to show that these conditions do not guarantee that the obtained operator is complex symmetric. We also obtain a new characterization of operators belonging to $\gsh$; in fact, we show that the property {``$T$ being $C$-normal for some conjugation $C$''} can be reduced to an equality involving operators that act on a Hilbert space, smaller than $H$, denoted by $\ess(T)$ (Theorem \ref{ess}).

\medskip

The main result of this paper is stated in the second section whereas the third one is devoted to its proof.

\section{Main results}

For non-zero vectors $u,v\in H$, we denote by $u\otimes v$ the rank one operator given by $(u\otimes v)(x)=\langle x,v\rangle u$ for all $x\in H$. Note that all rank-one operators have such representation.

Let $N \in \lh$ be a normal operator. In \cite{Ga.some}, the authors proved that if $U\in\lh$ is unitary and limit (in strong operator topology) of operators of the form $P(N,N^*)$ with $P\in\C[X,Y]$, then $N+\lambda Ux\otimes x\in\sh$ for all $x \in H$ and $\lambda \in \C$. Later in \cite{A.O}, this result is shown to remain valid to all unitary operators commuting with $N$. Since $\gsh$ contains all complex symmetric operators, then one may expect that a larger class of rank-one perturbations of normal operators must lie in $\gsh$. This is indeed the case.

For a normal operator $N$, $E_{N}$ denotes the spectral measure associated with $N$.

The main theorem of this paper is the following.

\begin{theorem}\label{main.t}
Let $N\in\lh$ be normal, and let $x$ and $y$ be two vectors in $H$. If 
\begin{enumerate}[\rm (i)]
\item $\langle NE_{\vert N\vert}(\Delta)x,x\rangle=\langle NE_{\vert N\vert}(\Delta)y,y\rangle$,
\item $\langle E_{\vert N\vert}(\Delta)x,x\rangle=\langle E_{\vert N\vert}(\Delta)y,y\rangle$,
\end{enumerate}
for every borel subset $\Delta\subset\R^+$, then
$$
N+\lambda y\otimes x \in\gsh\quad\mbox{for every $\lambda\in\C.$}
$$
\end{theorem}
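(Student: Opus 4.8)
The plan is to reduce the $C$-normality of $T := N + \lambda\, y\otimes x$ to a concrete set of relations tying a single conjugation $C$ to the data $N,x,y$, and then to manufacture such a $C$ by functional calculus in $|N|$.

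First I would expand $T^*T$ and $TT^*$. Using $(u\otimes v)^*=v\otimes u$, the normality of $N$, and the elementary rules $N^*(y\otimes x)=(N^*y)\otimes x$, $(x\otimes y)N=x\otimes(N^*y)$, $(x\otimes y)(y\otimes x)=\|y\|^2\,x\otimes x$, one finds
\[
T^*T = N^*N + \lambda\,(N^*y)\otimes x + \bar\lambda\, x\otimes(N^*y) + |\lambda|^2\|y\|^2\, x\otimes x,
\]
\[
TT^* = NN^* + \bar\lambda\,(Nx)\otimes y + \lambda\, y\otimes(Nx) + |\lambda|^2\|x\|^2\, y\otimes y.
\]
Taking $\Delta=\R^+$ in hypothesis (ii) gives $\|x\|=\|y\|$. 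Now for any conjugation $C$ one has $C(u\otimes v)C=(Cu)\otimes(Cv)$ and $C(\lambda A)C=\bar\lambda\,CAC$, so $C\,T^*T\,C$ is obtained from the first display by replacing each $u\otimes v$ with $(Cu)\otimes(Cv)$ and conjugating the scalars. Comparing term by term with $TT^*$, I see that $C\,T^*T\,C=TT^*$ holds simultaneously for \emph{every} $\lambda\in\C$ as soon as
\[
C|N|C = |N|, \qquad Cx = y, \qquad C N^*y = Nx;
\]
indeed these force $CN^*NC=N^*N=NN^*$, $(CN^*y)\otimes(Cx)=(Nx)\otimes y$, and $(Cx)\otimes(Cx)=y\otimes y$, while the conjugation of scalars interchanges the coefficients $\lambda$ and $\bar\lambda$ in exactly the right way. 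Thus everything reduces to producing one conjugation $C$, independent of $\lambda$, satisfying these three relations.

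To build $C$, I would work inside the $|N|$-reducing subspace
\[
W = \overline{\span}\bigl\{\, \varphi(|N|)x,\ \varphi(|N|)N^*y,\ \varphi(|N|)y,\ \varphi(|N|)Nx \ :\ \varphi \text{ bounded Borel on } \R^+ \,\bigr\},
\]
and on its algebraic span define $C$ as the antilinear extension of
\[
\varphi(|N|)x \mapsto \bar\varphi(|N|)y, \quad \varphi(|N|)N^*y \mapsto \bar\varphi(|N|)Nx, \quad \varphi(|N|)y\mapsto\bar\varphi(|N|)x, \quad \varphi(|N|)Nx \mapsto \bar\varphi(|N|)N^*y.
\]
By construction this assignment commutes with $|N|$ (since $|N|$ acts as multiplication by the real variable) and is formally involutive, and taking $\varphi\equiv 1$ yields $Cx=y$ and $CN^*y=Nx$. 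On the reducing complement $W^\perp$ I would take any conjugation commuting with the positive operator $|N|\big|_{W^\perp}$ (one exists for any self-adjoint operator) and set $C$ to be the direct sum.

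The substance of the argument is checking that the displayed assignment is \emph{well defined and isometric}, for then it extends by continuity to a conjugation of $W$ with the required properties. Well-definedness together with $\langle Cu,Cv\rangle=\langle v,u\rangle$ amounts to a short list of scalar identities among inner products of the generating vectors. The ``diagonal'' ones, such as $\langle\varphi(|N|)x,\psi(|N|)x\rangle=\langle\varphi(|N|)y,\psi(|N|)y\rangle$, come from hypothesis (ii), since both sides equal $\int\bar\psi\varphi\,d\mu$ with $\mu(\Delta)=\langle E_{|N|}(\Delta)x,x\rangle=\langle E_{|N|}(\Delta)y,y\rangle$. The one genuinely new identity, needed to make the involution consistent where the four families overlap, is the ``cross'' relation
\[
\langle \varphi(|N|)y,\ \psi(|N|)N^*y\rangle = \langle \varphi(|N|)Nx,\ \psi(|N|)x\rangle,
\]
which, writing $\phi=\bar\psi\varphi$ and using that $\phi(|N|)$ commutes with $N$, reduces to $\langle N\phi(|N|)y,y\rangle=\langle N\phi(|N|)x,x\rangle$, i.e. to $\int\phi\,d\langle NE_{|N|}(\cdot)y,y\rangle=\int\phi\,d\langle NE_{|N|}(\cdot)x,x\rangle$ — precisely hypothesis (i). I expect the main obstacle to be exactly this bookkeeping: arranging the collection of inner-product identities so that well-definedness, isometry, and involutivity all issue from (i) and (ii) together, and verifying that $C$ maps $W$ into $W$ so that the direct-sum operator is genuinely an involution on all of $H$. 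Once $C$ is in hand, the first step gives $C\,T^*T\,C=TT^*$, that is $N+\lambda\, y\otimes x\in\gsh$ for every $\lambda\in\C$.
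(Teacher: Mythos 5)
Your proposal is correct, and it takes a genuinely different route from the paper. The paper never constructs the conjugation for $N+\lambda y\otimes x$ directly: it first treats invertible $N$ by discretizing the spectrum of $\vert N\vert$ into pieces where it is within $\varepsilon$ of a scalar, building a block model $M=\oplus_i\,\alpha_iU_i$ whose rank-one perturbation is exactly $C$-normal (using a lemma that produces, for a unitary $U$ and vectors with $\|x\|=\|y\|$ and $\langle Ux,x\rangle=\langle Uy,y\rangle$, a conjugation with $Cx=y$ and $CUx=U^*y$), and then passes to the limit via a closedness theorem: an essentially normal operator lies in $\gsh$ if and only if it lies in $\overline{\gsh}$, whose proof needs compactness of $\vert A\vert^m-\vert A^*\vert^m$, the limit-of-conjugations result of \cite{capro}, and the characterization of $C$-normality through the subspace $\ess(A)$; a second approximation then removes the invertibility hypothesis. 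You replace all of this machinery by a single direct construction, and your two key steps are sound: the reduction to the three $\lambda$-independent relations $C\vert N\vert C=\vert N\vert$, $Cx=y$, $CN^*y=Nx$ (together with $\|x\|=\|y\|$, which is (ii) with $\Delta=\R^+$) is a correct term-by-term comparison, and the Gram-matrix criterion for extending your assignment on the four generating families to a conjugate-linear isometric involution of $W$ is exactly the right tool. The bookkeeping you deferred does close up: every pairwise identity $\langle Cu,Cv\rangle=\langle v,u\rangle$ among generators is either automatic (the $x$--$y$, $x$--$N^*y$, $y$--$Nx$, and $Nx$--$N^*y$ pairs collapse to the same inner product on both sides), or follows from (ii) (the $x$--$x$ and $y$--$y$ pairs, and the $Nx$--$Nx$ and $N^*y$--$N^*y$ pairs after writing $N\phi(\vert N\vert)N^*=\phi(\vert N\vert)\vert N\vert^2$), or follows from (i) (the $x$--$Nx$ and $y$--$N^*y$ pairs); one only needs the standard facts that $N$ commutes with every bounded Borel function of $\vert N\vert$ and that hypotheses (i)--(ii) pass from indicator functions to bounded Borel functions by approximation, and the existence of a conjugation on $W^\perp$ commuting with $\vert N\vert_{\vert W^\perp}$ is the same ``normal operators are complex symmetric'' fact the paper itself invokes. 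What your approach buys is a self-contained, non-approximative proof that exhibits the conjugation explicitly and avoids essential normality, compactness, and the deep input from \cite{capro} altogether; what the paper's longer route buys is the auxiliary structure theory ($\ess(A)$, the closedness of $\gsh$ among essentially normal operators, the splitting-off of normal direct summands), which the authors value as results of independent interest.
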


For unitary operators, the previous theorem can be reformulated as follows.

\begin{theorem}
Let $U\in\lh$ be a unitary operator, and let $x$ and $y$ be two vectors in $H$ having the same norm and such that $\langle Ux,x\rangle=\langle Uy,y\rangle$. Then 
$$
U+\lambda y\otimes x \in\gsh\quad\mbox{for every $\lambda\in\C.$}
$$
\end{theorem}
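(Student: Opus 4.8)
The plan is to deduce this theorem directly from Theorem \ref{main.t} by taking $N=U$ and checking that the two hypotheses on $x$ and $y$ stated here are exactly the specializations of conditions (i) and (ii) of that theorem to the unitary case.

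The first step is to compute $\vert U\vert$ together with its spectral measure. Since $U$ is unitary we have $U^*U=I$, so $\vert U\vert=\sqrt{U^*U}=I$. The spectral measure of the identity operator is concentrated at the single point $1$: for every Borel subset $\Delta\subset\R^+$ one has $E_{\vert U\vert}(\Delta)=I$ when $1\in\Delta$ and $E_{\vert U\vert}(\Delta)=0$ otherwise. In particular $U E_{\vert U\vert}(\Delta)$ equals either $U$ or $0$, according to whether $1\in\Delta$ or not.

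With this in hand, each of the two families of identities in Theorem \ref{main.t} collapses to a single non-trivial equation. Condition (ii), namely $\langle E_{\vert U\vert}(\Delta)x,x\rangle=\langle E_{\vert U\vert}(\Delta)y,y\rangle$, holds automatically when $1\notin\Delta$ (both sides vanish) and reduces to $\langle x,x\rangle=\langle y,y\rangle$ when $1\in\Delta$, that is, to $\Vert x\Vert=\Vert y\Vert$. Likewise condition (i), $\langle U E_{\vert U\vert}(\Delta)x,x\rangle=\langle U E_{\vert U\vert}(\Delta)y,y\rangle$, is trivial for $1\notin\Delta$ and becomes $\langle Ux,x\rangle=\langle Uy,y\rangle$ for $1\in\Delta$. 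Hence both hypotheses of Theorem \ref{main.t} are satisfied precisely when $x$ and $y$ have equal norm and $\langle Ux,x\rangle=\langle Uy,y\rangle$, which are exactly the assumptions of the present statement.

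Applying Theorem \ref{main.t} then yields $U+\lambda y\otimes x\in\gsh$ for every $\lambda\in\C$, which completes the argument. There is essentially no obstacle to overcome here: the whole content is the observation that for a unitary operator the modulus is the identity, so that the spectral-measure conditions degenerate to the two scalar conditions figuring in the hypothesis, and no further estimate or explicit construction of a conjugation is needed beyond what Theorem \ref{main.t} already supplies.
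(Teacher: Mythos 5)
Your proposal is correct and follows exactly the paper's own argument: since $U$ is unitary, $\vert U\vert=I$, so $E_{\vert U\vert}(\Delta)$ is either $0$ or $I$, and conditions (i) and (ii) of Theorem \ref{main.t} collapse precisely to the hypotheses $\Vert x\Vert=\Vert y\Vert$ and $\langle Ux,x\rangle=\langle Uy,y\rangle$. You merely spell out in more detail what the paper states compactly, so there is nothing to add.
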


\begin{proof} The proof follows from the fact that $U$ satisfies the conditions of the previous theorem. Indeed, since $\vert U\vert=I$, we have either $E_{\vert U\vert}(\Delta)=0$ or $E_{\vert U\vert}(\Delta)=I$ for every Borel subset $\Delta$ of $\R^+$.
\end{proof}

As a consequence of the previous theorem, we derive the following corollary.

\begin{corollary}
Let $U$ and $V$ be unitary operators acting on a complex separable Hilbert space $K$, and let $R\in\mathcal{B}(K)$ be a rank-one operator. Then the operators
$$
\begin{pmatrix}
R & V\\
U & 0
\end{pmatrix},
\quad
\begin{pmatrix}
0 & V+R\\
U & 0
\end{pmatrix},
\quad
\begin{pmatrix}
0 & V\\
U+R & 0
\end{pmatrix}
\quad
\mbox{and}
\quad
\begin{pmatrix}
0 & V\\
U & R
\end{pmatrix}
$$
belong to $\mathcal{GS}(K\oplus K)$.
\end{corollary}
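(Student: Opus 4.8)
The plan is to realize all four block matrices as rank-one perturbations of a single unitary operator on $K\oplus K$ and then invoke the unitary reformulation of Theorem \ref{main.t}. I set
$$
N=\begin{pmatrix} 0 & V\\ U & 0\end{pmatrix}.
$$
First I would verify that $N$ is unitary: a direct block multiplication, using $UU^*=U^*U=VV^*=V^*V=I$, gives $NN^*=N^*N=\begin{pmatrix} I & 0\\ 0 & I\end{pmatrix}$. In particular $N$ is normal, so the unitary version of the main theorem applies to it.

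Next I would write $R=u\otimes v$ for suitable nonzero $u,v\in K$, and observe that each of the four operators equals $N+P$, where $P$ is a rank-one operator on $K\oplus K$ of the form $P=\tilde y\otimes\tilde x$ with $\tilde x,\tilde y$ each supported on a single coordinate. Concretely, the four perturbations correspond, in the order listed, to $(\tilde y,\tilde x)$ being $((u,0),(v,0))$, $((u,0),(0,v))$, $((0,u),(v,0))$, and $((0,u),(0,v))$; in each case a one-line computation confirms that $\tilde y\otimes\tilde x$ acts as the prescribed corner block.

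The crucial point is that the quadratic form of $N$ vanishes on vectors supported on a single coordinate. Indeed, for $w=(w_1,w_2)$ one has $\langle Nw,w\rangle=\langle Vw_2,w_1\rangle+\langle Uw_1,w_2\rangle$, which is $0$ whenever $w_1=0$ or $w_2=0$. Hence in all four cases $\langle N\tilde x,\tilde x\rangle=\langle N\tilde y,\tilde y\rangle=0$. Normalizing, I set $x=\tilde x/\|\tilde x\|$, $y=\tilde y/\|\tilde y\|$ and $\lambda=\|\tilde x\|\,\|\tilde y\|$, so that $x,y$ are unit vectors with $\langle Nx,x\rangle=\langle Ny,y\rangle=0$ and $P=\lambda\,y\otimes x$. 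The hypotheses of the unitary reformulation of Theorem \ref{main.t}, namely equal norms and equal numerical values, are thereby met, and it yields $N+\lambda\,y\otimes x\in\mathcal{GS}(K\oplus K)$ for each of the four choices of $P$.

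There is no serious obstacle here; the only point requiring care is the bookkeeping of the rank-one representation, that is, using the scalar $\lambda$ to absorb the norms of $\tilde x$ and $\tilde y$ so that the equal-norm condition holds, rather than attempting to apply the theorem with the literal vectors $u,v$, whose norms need not coincide. Once the block-antidiagonal structure of $N$ forces both numerical values to be $0$, the verification of the remaining hypothesis is immediate.
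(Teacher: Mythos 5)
Your proof is correct and follows essentially the same route as the paper: both decompose each block matrix as the unitary $\begin{pmatrix} 0 & V\\ U & 0\end{pmatrix}$ plus a rank-one perturbation supported in a single corner, observe that the quadratic form of this unitary vanishes on vectors supported in one coordinate, and invoke the unitary reformulation of Theorem \ref{main.t}. The only difference is presentational: you spell out all four corner cases and the normalization explicitly, whereas the paper treats the first case and says the others are similar.
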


\begin{proof}
Write
$$
A=\begin{pmatrix}
R & V\\
U & 0
\end{pmatrix},\quad
W=\begin{pmatrix}
0 & V\\
U & 0
\end{pmatrix}\quad\mbox{and}\quad
F=
\begin{pmatrix}
R & 0\\
0 & 0
\end{pmatrix}.
$$
Then $W$ is unitary and $F=\| F\| y\otimes x$ where $x$ and $y$ are unit vectors in $K\oplus 0$. As $\langle Wx,x\rangle=0=\langle Wy,y\rangle$, we get by the previous theorem that $A=W+F\in\mathcal{GS}(K\oplus K)$. Similarly, we prove that the other
operators are in $\mathcal{GS}(K\oplus K)$.
\end{proof}

The following example shows that the conditions of Theorem \ref{main.t} do not guarantee that the perturbed  normal operator is complex symmetric.

\begin{example}\label{ExampleIntro}
Let $A\in\mathcal{B}(\C^4)$ be the operator represented in the canonical basis $\lbrace e_i\rbrace$ by the following matrix
$$
A=\begin{pmatrix}
1 & 0 & 0 & 1\\
-1 & 0 & 1 & 0\\
1 & 0 & 0 & 0\\
0 & 1 & 0 & 0
\end{pmatrix}.
$$
Then $A=U + \sqrt{2} y\otimes x$ where
$$
U=\begin{pmatrix}
0 & 0 & 0 & 1\\
0 & 0 & 1 & 0\\
1 & 0 & 0 & 0\\
0 & 1 & 0 & 0
\end{pmatrix}
$$
is a unitary operator, and $x=e_1$ and $y=\sqrt{2}^{-1}(e_1-e_2)$ are unit vectors satisfying the conditions of Theorem \ref{main.t}. If we let $trace$ denote the trace of an operator, then we have
$$
trace\left( A^2(A{A^*}^2-{A^*}^2A)A^2A^* \right)=4,
$$
and so $A$ is not complex symmetric by \cite[Theorem 1]{gapoten}.
\end{example}

We note that the second condition in Theorem \ref{main.t} cannot be relaxed. This can be seen from the next example.

\begin{example}
Consider the operator $A\in\mathcal{B}(\C^3)$ having the following matrix representation in the canonical basis $\lbrace e_i\rbrace$ 
$$
A=\begin{pmatrix}
1 & 0 & 1\\
0& -1& 1\\
0& 0& 0
\end{pmatrix}.
$$
Write $A=N+\sqrt{2}y\otimes x$ where
$$
N=\begin{pmatrix}
1 & 0 & 0\\
0& -1& 0\\
0& 0& 0
\end{pmatrix},
$$
$x=e_3$ and $y=\sqrt{2}^{-1}(e_1+e_2)$. Clearly, $N$ is normal, and the spectral projections corresponding to $\vert N\vert$ are $E_1=I$, $E_2=I-e_3\otimes e_3$, $E_3=e_3\otimes e_3$ and $E_4=0$. Furthermore, 
\begin{eqnarray*}
0=\langle NE_1 x,x\rangle=\langle NE_1 y,y\rangle &=&\langle NE_2 x,x\rangle=\langle NE_2 y,y\rangle\\
&=& \langle NE_3 x,x\rangle=\langle NE_3 y,y\rangle,\\
&=& \langle NE_4 x,x\rangle=\langle NE_4 y,y\rangle,
\end{eqnarray*}
and hence the first condition in Theorem \ref{main.t} is fulfilled, which is not the case of the second one because $0=\langle E_2 x,x\rangle\neq\langle E_2 y,y\rangle=1$.

Let us show that $A\notin\mathcal{GS}(\C^3)$. Assume that $A$ is $C$-normal for some conjugation $C$ on $\C^3$. Elementary calculations show that
$$
A^*A=
\begin{pmatrix}
1 & 0 & 1\\
0& 1& -1\\
1& -1& 2
\end{pmatrix},\quad
AA^*=
\begin{pmatrix}
2 & 1 & 0\\
1& 2& 0\\
0& 0& 0
\end{pmatrix},
$$

$$
\ker(A^*A)=\span\lbrace f_1\rbrace,\quad \ker(A^*A-3)=\span\lbrace f_2\rbrace
$$
$$
\ker(AA^*)=\span\lbrace g_1\rbrace\quad\mbox{and}\quad \ker(AA^*-3)=\span\lbrace g_2 \rbrace
$$
where 
$$
f_1= \sqrt{3}^{-1}(e_1-e_2-e_3), f_2=\sqrt{6}^{-1}(e_1-e_2+2e_3), g_1=e_3 \mbox{ and } g_2=\sqrt{2}^{-1}(e_1+e_2).
$$
Since the above vectors are unit, the equalities
$$
CA^*AC=AA^*\quad\mbox{and}\quad C(A^*A-3)C=AA^*-3
$$
ensure the existence of unimodular scalars $\alpha_1,\alpha_2\in\C$ such that $Cf_1=\alpha_1 g_1$ and $Cf_2=\alpha_2 g_2$.
Hence,
$$
0=\vert\langle f_1,g_2\rangle\vert=\vert\langle Cg_2, Cf_1\rangle\vert=\vert\langle f_2,g_ 1\rangle\vert=2\sqrt{6}^{-1},
$$
a contradiction.
\end{example}

The following example illustrates that the first condition also is indispensable in Theorem \ref{main.t}.

\begin{example}
Let $U\in\mathcal{B}(\C^3)$ be the unitary operator given by
$$
U=\begin{pmatrix}
i & 0 & 0\\
0& -1& 0\\
0& 0& 1
\end{pmatrix},
$$
and set $x=e_3$ and $y=\sqrt{2}^{-1}(e_1+e_2)$. Then, it is easy to see that $x$ and $y$ satisfy the second condition of Theorem \ref{main.t}. On the other hand, the first one is not satisfied because
$$
1=\langle UE_{\vert U\vert}(\lbrace 1\rbrace)x,x\rangle\neq\langle UE_{\vert U\vert}(\lbrace 1\rbrace)y,y\rangle=2^{-1}(i-1).
$$
Now we show that
$$
A:=U+\sqrt{2}y\otimes x=
\begin{pmatrix}
i & 0 & 1\\
0& -1& 1\\
0& 0& 1
\end{pmatrix}\notin\mathcal{GS}(\C^3).
$$
Calculations yield that
$$
A^*A=\begin{pmatrix}
1 & 0 & -i\\
0& 1& -1\\
i& -1& 3
\end{pmatrix}
\quad\mbox{and}\quad
AA^*=\begin{pmatrix}
2 & 1 & 1\\
1& 2& 1\\
1& 1& 1
\end{pmatrix}
$$
$$
\ker(A^*A-1)=\span\lbrace f_1\rbrace,\quad \ker\left(A^*A-(\sqrt{3}+2)\right)=\span\lbrace f_2\rbrace
$$
$$
\ker(AA^*-1)=\span\lbrace g_1\rbrace\quad\mbox{and}\quad \ker\left(AA^*-(\sqrt{3}+2)\right)=\span\lbrace g_2 \rbrace
$$
where
$$
f_1=\sqrt{2}^{-1}(e_1+ie_2),\quad  f_2=\sqrt{(6+2\sqrt{3})}^{-1}\left(e_1-ie_2+(\sqrt{3}+1)ie_3\right),
$$
$$
g_1= \sqrt{2}^{-1}(e_1-e_2),\quad\mbox{and}\quad g_2=\sqrt{(6-2\sqrt{3})}^{-1}\left(e_1+e_2+(\sqrt{3}-1)e_3\right).
$$ 
Hence if $A$ is $C$-normal for some conjugation $C$, then we can prove, as in the previous example, that
$$
\sqrt{(6-2\sqrt{3})}^{-1}=\vert\langle f_1,g_2\rangle\vert=\vert\langle f_2,g_1\rangle\vert=\sqrt{(6+2\sqrt{3})}^{-1},
$$
which is a contradiction.
\end{example}

Recall that an operator $V$ acting on a Hilbert space is called {\it partial isometry} if $\| Vx\|=\| x\|$ for every $x\in\ker(V)^\perp$, or equivalently, if $V^*V$ is the projection onto $\ker(V)^\perp$.

\begin{corollary}
Let $N\in\lh$ be normal, and let $V\in\lh$ be a partial isometry that commutes with $N$. Then
$$
N+\lambda Vx\otimes x \in\gsh\quad\mbox{for all $x\in \ker(V)^\perp$ and $\lambda\in\C$}.
$$
\end{corollary}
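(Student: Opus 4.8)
The plan is to deduce the corollary directly from Theorem \ref{main.t} by applying it with $y=Vx$. For $x\in\ker(V)^\perp$ the perturbation is exactly $\lambda Vx\otimes x$, so it suffices to verify that the pair $(x,Vx)$ satisfies hypotheses (i) and (ii) of the theorem. The degenerate case $x=0$ is immediate, since then $N+\lambda Vx\otimes x=N$ is normal, hence complex symmetric and in particular a member of $\gsh$; so I may assume $x\neq0$, in which case $\|Vx\|=\|x\|\neq0$ because $V$ is a partial isometry.

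The key technical input is Fuglede's theorem. Since $N$ is normal and $VN=NV$, Fuglede's theorem gives $VN^*=N^*V$, so $V$ commutes with $N^*N=\vert N\vert^2$ and hence, through the spectral theorem, with $\vert N\vert$ and with every spectral projection $E_{\vert N\vert}(\Delta)$. Recalling that $V^*V$ is the orthogonal projection onto $\ker(V)^\perp$, so that $V^*Vx=x$, these are all the ingredients needed.

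With these commutations in hand the verification is immediate. Writing $E=E_{\vert N\vert}(\Delta)$ and using $EV=VE$ together with the self-adjointness of the projection $V^*V$,
\begin{equation*}
\langle E Vx, Vx\rangle=\langle V^*VEx,x\rangle=\langle Ex,V^*Vx\rangle=\langle Ex,x\rangle,
\end{equation*}
the last equality because $V^*V$ fixes $x\in\ker(V)^\perp$; this is condition (ii). Condition (i) follows in the same way after inserting $N$: combining $EV=VE$ with $NV=VN$ yields $NEV=VNE$, whence
\begin{equation*}
\langle NE Vx, Vx\rangle=\langle V^*VNE x,x\rangle=\langle NEx,V^*Vx\rangle=\langle NEx,x\rangle.
\end{equation*}
Thus both hypotheses of Theorem \ref{main.t} hold with $y=Vx$, and the conclusion follows.

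The only genuine subtlety—and the step I would flag as the crux—is propagating the commutation relation from $N$ to the spectral measure $E_{\vert N\vert}$. Commuting with a normal operator does not by itself force commuting with its adjoint, yet it is precisely the commutation with $N^*$, hence with $\vert N\vert$, that is required here; this is exactly what Fuglede's theorem supplies, and it is what ultimately lets $V$ pass through each $E_{\vert N\vert}(\Delta)$ in the computations above.
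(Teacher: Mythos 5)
Your proposal is correct and follows essentially the same route as the paper: invoke Fuglede's theorem to pass the commutation $VN=NV$ on to $N^*N=\vert N\vert^2$ and hence to the spectral projections $E_{\vert N\vert}(\Delta)$, then verify both hypotheses of Theorem \ref{main.t} for the pair $(x,Vx)$ using $V^*Vx=x$. The extra remarks (the case $x=0$, the norm equality $\|Vx\|=\|x\|$) are harmless but unnecessary, since Theorem \ref{main.t} is stated for arbitrary vectors.
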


\begin{proof}
As $V$ commutes with $N$, and hence with $N^*N$ by Fuglede's theorem, it follows that $V$ commutes with all the spectral projections of $\vert N\vert$. Hence, for every Borel subset $\Delta\subset\R^+$, we have
\begin{eqnarray*}
\langle NE_{\vert N\vert}(\Delta)Vx,Vx\rangle &=&\langle VNE_{\vert N\vert}(\Delta)x,Vx\rangle\\
&=&\langle NE_{\vert N\vert}(\Delta)x,V^*Vx\rangle\\
&=&\langle NE_{\vert N\vert}(\Delta)x,x\rangle,
\end{eqnarray*}
and similarly, $\langle E_{\vert N\vert}(\Delta)Vx,Vx\rangle=\langle E_{\vert N\vert}(\Delta)x,x\rangle$. The proof follows now from Theorem \ref{main.t}.
\end{proof}

\begin{corollary}
Let $N\in\lh$ be normal, and let $R\in\lh$ be a rank-one operator that commutes with $N$. Then
$N+R\in\gsh$.
\end{corollary}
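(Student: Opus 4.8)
The plan is to reduce the statement to Theorem \ref{main.t} by first pinning down the exact shape of a rank-one operator that commutes with a normal operator. I would write $R=u\otimes v$ for some nonzero vectors $u,v\in H$ (every rank-one operator has this form). Using $N(u\otimes v)=(Nu)\otimes v$ and $(u\otimes v)N=u\otimes(N^*v)$, the hypothesis $NR=RN$ translates into the single operator equation $(Nu)\otimes v=u\otimes(N^*v)$, which is the object I would analyze.

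Next I would exploit the equality of these two rank-one operators. Since two nonzero rank-one operators $a\otimes b$ and $c\otimes d$ agree only when $a,c$ are proportional and $b,d$ are proportional, the equation forces $Nu=\alpha u$ and $N^*v=\gamma v$ for suitable scalars, and comparing the resulting scalar multiples of $u\otimes v$ gives $\gamma=\bar\alpha$. The degenerate possibility $Nu=0$ is disposed of by observing that it makes the left-hand side vanish, whence $N^*v=0$ as well. Now I would invoke the normality of $N$: eigenspaces reduce $N$ and $\ker(N-\alpha)=\ker(N^*-\bar\alpha)$, so $N^*v=\bar\alpha v$ yields $Nv=\alpha v$. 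The conclusion of this step is that $u$ and $v$ are both eigenvectors of $N$ associated with one and the same eigenvalue $\alpha$.

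With this structure secured, I would set $x=v/\|v\|$, $y=u/\|u\|$ and $\lambda=\|u\|\,\|v\|$, so that $R=\lambda\,y\otimes x$ and $x,y$ are unit eigenvectors of $N$ for $\alpha$. Then $N^*Nx=|\alpha|^2x$ gives $\vert N\vert x=|\alpha|\,x$, and likewise $\vert N\vert y=|\alpha|\,y$, so each spectral projection $E_{\vert N\vert}(\Delta)$ fixes $x$ (respectively $y$) when $|\alpha|\in\Delta$ and annihilates it otherwise. Combining this with $Nx=\alpha x$, $Ny=\alpha y$ and the fact that $N$ commutes with $E_{\vert N\vert}(\Delta)$, I would check that $\langle NE_{\vert N\vert}(\Delta)x,x\rangle$ and $\langle NE_{\vert N\vert}(\Delta)y,y\rangle$ both equal $\alpha$ when $|\alpha|\in\Delta$ and vanish otherwise, and similarly that $\langle E_{\vert N\vert}(\Delta)x,x\rangle$ and $\langle E_{\vert N\vert}(\Delta)y,y\rangle$ both equal $1$ or $0$ according as $|\alpha|\in\Delta$ or not. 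Thus conditions (i) and (ii) of Theorem \ref{main.t} hold for every Borel set $\Delta\subset\R^+$, and the theorem yields $N+R=N+\lambda\,y\otimes x\in\gsh$. The one genuinely delicate point is the identification step: extracting from the lone equation $(Nu)\otimes v=u\otimes(N^*v)$ that $u$ and $v$ share a common eigenvalue; once normality is used to transfer between $N$ and $N^*$ on eigenvectors, everything else is routine and needs no input beyond Theorem \ref{main.t}.
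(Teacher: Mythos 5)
Your proof is correct, but it takes a genuinely different route from the paper's. The paper never identifies the structure of $R$: writing $R=\lambda x\otimes y$ with $x,y$ unit vectors, it uses the commutation of $R$ with $N$ (and, implicitly, Fuglede's theorem) to get that $x\otimes y$ commutes with $E_{\vert N\vert}(\Delta)$ and with $NE_{\vert N\vert}(\Delta)$, and then verifies hypotheses (i) and (ii) of Theorem \ref{main.t} by short rank-one computations, e.g.
$$
\langle E_{\vert N\vert}(\Delta)x,x\rangle=\Vert E_{\vert N\vert}(\Delta)(x\otimes y)\Vert^2=\Vert (x\otimes y)E_{\vert N\vert}(\Delta)\Vert^2=\langle E_{\vert N\vert}(\Delta)y,y\rangle .
$$
You instead first classify the rank-one commutant: from $(Nu)\otimes v=u\otimes(N^*v)$ you deduce $Nu=\alpha u$ and $N^*v=\bar\alpha v$, and then, via the eigenspace identity $\ker(N-\alpha)=\ker(N^*-\bar\alpha)$ for normal operators, that $u$ and $v$ are eigenvectors of $N$ for the \emph{same} eigenvalue $\alpha$; after that, the hypotheses of Theorem \ref{main.t} are immediate because $E_{\vert N\vert}(\Delta)$ acts as $0$ or $I$ on $x$ and $y$. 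Your classification step is sound, including the degenerate case $Nu=0$, and it buys extra information: it shows that a rank-one operator can commute with a normal operator only if that operator has an eigenvalue, so the corollary is vacuous for normal operators with empty point spectrum; it also replaces the implicit appeal to Fuglede's theorem by an elementary eigenspace argument. The paper's computation, by contrast, is shorter, needs no case analysis, and treats both hypotheses uniformly without ever extracting the eigenvector structure.
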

\begin{proof}
Set $R=\lambda x\otimes y$ where $x$ and $y$ are unit vectors. Let $\Delta\subset \R^+$ be a Borel subset. Since $R$ commutes with $N$, we get
$$
(x\otimes y)E_{\vert N\vert}(\Delta)=E_{\vert N\vert}(\Delta)(x\otimes y)\quad\mbox{and}\quad
(x\otimes y)NE_{\vert N\vert}(\Delta)=NE_{\vert N\vert}(\Delta)(x\otimes y).
$$
It follows that
\begin{eqnarray*}
\langle E_{\vert N\vert}(\Delta) x ,x\rangle &=&\Vert E_{\vert N\vert}(\Delta) x\Vert^2
=\Vert (E_{\vert N\vert}(\Delta)x)\otimes y\Vert^2=\Vert E_{\vert N\vert}(\Delta)(x\otimes y)\Vert^2\\
&=& \Vert (x\otimes y)E_{\vert N\vert}(\Delta)\Vert^2=\Vert x\otimes (E_{\vert N\vert}(\Delta)y)\Vert^2=\langle E_{\vert N\vert}(\Delta) y ,y\rangle
\end{eqnarray*}
and
\begin{eqnarray*}
\langle NE_{\vert N\vert}(\Delta) x ,x\rangle &=&\langle NE_{\vert N\vert}(\Delta) (x\otimes y)y,x\rangle=\langle (x\otimes y)NE_{\vert N\vert}(\Delta)y,x\rangle\\
&=&\langle x\otimes (E_{\vert N\vert}(\Delta)N^*y)y,x\rangle=\langle y,E_{\vert N\vert}(\Delta)N^*y\rangle\langle x,x\rangle\\
&=&\langle NE_{\vert N\vert}(\Delta) y,y\rangle.
\end{eqnarray*}
Therefore, $N+R\in\gsh$ by Theorem \ref{main.t}.
\end{proof}

The proof of the main result, which will be given at the next section, requires some preliminaries.
We begin with the following technical lemma.

\section{Proof of the main result}

\begin{lemma}\label{conj.angl}
Let $U\in\lh$ be a unitary operator, and let $x$ and $y$ be vectors in $H$ having the same norm and such that $\langle Ux,x\rangle=\langle Uy,y\rangle$. Then there is a conjugation $C$ on $H$ satisfying $Cx=y$ and $CUx=U^*y$.
\end{lemma}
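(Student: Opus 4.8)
The plan is to build the required conjugation first on the (at most four-dimensional) subspace spanned by the vectors involved, and then extend it to all of $H$ by a routine direct-sum argument. Concretely, I would set $v_1=x$, $v_2=Ux$, $v_3=y$, $v_4=U^*y$, write $M=\span\{v_1,v_2,v_3,v_4\}$, and define a conjugate-linear map $C_0$ on $M$ by
$$
C_0v_1=v_3,\qquad C_0v_2=v_4,\qquad C_0v_3=v_1,\qquad C_0v_4=v_2,
$$
extended conjugate-linearly. If $C_0$ turns out to be a well-defined conjugation on $M$, then picking any conjugation $C_1$ on $M^\perp$ (every Hilbert space carries one) and setting $C=C_0\oplus C_1$ produces a conjugation on $H$ with $Cx=C_0x=y$ and $CUx=C_0Ux=U^*y$, as desired; that such an orthogonal direct sum is again a conjugation is standard.

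The computational heart of the argument is a single batch of inner-product identities. Writing $\sigma$ for the involution of indices interchanging $1\leftrightarrow 3$ and $2\leftrightarrow 4$, I would verify that $\langle v_{\sigma(i)},v_{\sigma(j)}\rangle=\langle v_j,v_i\rangle$ for all $1\le i,j\le 4$. The four diagonal identities reduce to $\|x\|=\|y\|$ (recall $U$ is unitary, so $\|Ux\|=\|x\|$ and $\|U^*y\|=\|y\|$); the identities pairing a vector from $\{x,Ux\}$ with one from $\{y,U^*y\}$ follow either from unitarity alone, via $\langle U^*y,x\rangle=\langle y,Ux\rangle$, or from the hypothesis $\langle Ux,x\rangle=\langle Uy,y\rangle$ rewritten as $\langle Ux,x\rangle=\langle y,U^*y\rangle$. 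Together these say precisely that the conjugate Gram matrix of $(v_3,v_4,v_1,v_2)$ coincides with the Gram matrix of $(v_1,v_2,v_3,v_4)$, which is exactly the compatibility an isometric conjugate-linear map needs.

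Granting these identities, the checks that $C_0$ is isometric and involutive are immediate on the spanning vectors and then propagate to all of $M$ by conjugate-linearity, so the only genuine obstacle is well-definedness: when the four vectors are linearly dependent, any relation $\sum_i a_iv_i=0$ must force the corresponding relation $\sum_i\bar a_i\,C_0v_i=0$ among the images. I expect this to be the one step requiring care, and the plan is to dispose of all degenerate configurations at once. Pairing $\sum_i a_iv_i=0$ against each $v_k$ gives $\sum_i a_i\langle v_i,v_k\rangle=0$; conjugating and invoking the identities above rewrites this as $\sum_j \bar a_{\sigma(j)}\langle v_j,v_m\rangle=0$ for every $m$, so the vector $w:=\sum_j \bar a_{\sigma(j)}v_j$ lies in $M$ yet is orthogonal to all of $M$, whence $w=0$. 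Since $w$ equals $\sum_i\bar a_i\,C_0v_i$, this is exactly the sought well-definedness. With $C_0$ thereby shown to be a conjugation on $M$, the extension step above finishes the proof.
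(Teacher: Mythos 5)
Your proof is correct, but it follows a genuinely different route from the paper's. The paper normalizes $x,y$ to unit vectors, writes $Ux=\alpha x+\beta a$ and $U^*y=\tilde{\alpha}y+\tilde{\beta}b$ with $a\perp x$, $b\perp y$, $\beta,\tilde{\beta}\geq 0$, checks that $\tilde{\alpha}=\overline{\alpha}$, $\tilde{\beta}=\beta$ and (when $\beta\neq 0$) that $\langle x,b\rangle=\langle a,y\rangle$, and then invokes \cite[Theorem 2.1]{zhu.guang}, a criterion that produces a conjugation sending prescribed vectors to prescribed vectors from exactly such Gram-type identities; the equality $CUx=U^*y$ then follows by conjugate-linearity, with a separate trivial case when $\beta=0$. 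You instead build the conjugation by hand on $M=\span\lbrace x,Ux,y,U^*y\rbrace$ as the swap $x\leftrightarrow y$, $Ux\leftrightarrow U^*y$, verify the compatibility identities $\langle v_{\sigma(i)},v_{\sigma(j)}\rangle=\langle v_j,v_i\rangle$ (your three ingredients --- $\|x\|=\|y\|$, unitarity in the form $\langle U^*y,x\rangle=\langle y,Ux\rangle$, and the hypothesis rewritten as $\langle Ux,x\rangle=\langle y,U^*y\rangle$ --- are exactly the ones needed), and dispose of well-definedness by the observation that any obstructing vector $w=\sum_i\overline{a}_iC_0v_i$ lies in $M\cap M^{\perp}=\lbrace 0\rbrace$; the extension to $H$ by an arbitrary conjugation on $M^{\perp}$ is indeed standard. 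In effect you re-prove, for this four-vector family, the very criterion the paper cites from Zhu--Li: because your family is closed under the intended map and your index map $\sigma$ is an involution, the single condition $\langle v_{\sigma(i)},v_{\sigma(j)}\rangle=\langle v_j,v_i\rangle$ subsumes both halves of that criterion. What the paper's route buys is brevity, at the price of an orthogonal decomposition, a case split, and reliance on an external theorem; what yours buys is a self-contained, case-free argument that makes visible exactly where each hypothesis enters.
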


\begin{proof}
Obviously, there is no loss of generality in assuming that $x$ and $y$ are unit vectors.
Let $a$ and $b$ be unit vectors in $x^\perp$ and $y^\perp$, respectively, such that
$$
Ux=\alpha x + \beta a\quad\mbox{and}\quad U^*y=\tilde{\alpha} y + \tilde{\beta} b
$$
where $\beta,\tilde{\beta}\geq0$. Then
$$
\alpha=\langle Ux,x\rangle=\langle Uy,y\rangle=\overline{\langle U^*y,y\rangle}=\overline{\tilde{\alpha}},
$$
and
$$
\tilde{\beta}=\sqrt{1-\vert \tilde{\alpha}\vert^2}=\sqrt{1-\vert \alpha\vert^2}=\beta.
$$
Note that if $\beta=0$ then, by \cite[Theorem 2.1]{zhu.guang}, there exists a conjugation $C$ on $H$ such that $Cx=y$, and whence $CUx=C(\alpha x)=\overline{\alpha}y=U^*y$.

Suppose now that $\beta$ is nonzero. Since
$$
\langle x,b\rangle=\langle x,\beta^{-1}(U^*y-\overline{\alpha}y)\rangle=\beta^{-1}\left(\langle x,U^*y\rangle-\alpha\langle x,y\rangle\right)=\beta^{-1}\left(\langle Ux,y\rangle-\alpha\langle x,y\rangle\right)
$$
and
$$
\langle a,y\rangle=\beta^{-1}\langle(Ux-\alpha x),y\rangle=\beta^{-1}\left(\langle Ux,y\rangle-\alpha\langle x,y\rangle\right),
$$
we have $\langle x,b\rangle=\langle a,y\rangle$, whence, again by \cite[Theorem 2.1]{zhu.guang}, there exists a conjugation $C$ on $H$ satisfying $Cx=y$ and $Ca=b$. This completes the proof.
\end{proof}

For an operator $A\in\lh$, we let {\it $\ess(A)$} denote the orthogonal of the largest subspace $F$ of $H$ that satisfies the following conditions:
\begin{enumerate}[\rm (i)]
\item $F$ is invariant under $\vert A\vert$ and $\vert A^*\vert$,
\item $\vert A\vert_{\vert F}=\vert A^*\vert_{\vert F}$.
\end{enumerate}

For a subset $F\subseteq H$, we denote by $\overline{F}$ its topological closure.

\begin{proposition}
Let $A\in\lh$. Then 
$$
\ess(A) = \overline{\sum_{n\geq 1} \ran(\vert A\vert^n-\vert A^*\vert^n}).
$$
\end{proposition}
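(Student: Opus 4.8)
The plan is to identify the maximal subspace $F$ appearing in the definition of $\ess(A)$ explicitly, and then pass to orthogonal complements. Write $P=\vert A\vert$ and $Q=\vert A^*\vert$, both positive (hence self-adjoint) operators, and set $G=\overline{\sum_{n\geq 1}\ran(P^n-Q^n)}$ for the right-hand side. The first observation is that each $P^n-Q^n$ is self-adjoint, so that $\ran(P^n-Q^n)^\perp=\ker(P^n-Q^n)$; taking orthogonal complements in the defining sum therefore gives
$$
G^\perp=\bigcap_{n\geq 1}\ker(P^n-Q^n)=\{x\in H:\ P^nx=Q^nx\ \text{for all }n\geq 1\}.
$$
Thus it suffices to prove that $G^\perp$ is precisely the largest subspace $F$ satisfying (i) and (ii), for then $\ess(A)=F^\perp=(G^\perp)^\perp=G$, the last equality because $G$ is closed.

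I would first check that $G^\perp$ itself satisfies (i) and (ii). Condition (ii) is immediate: taking $n=1$ in the description above yields $Px=Qx$ for every $x\in G^\perp$. For the invariance (i), fix $x\in G^\perp$ and verify that $Px\in G^\perp$, i.e. that $P^n(Px)=Q^n(Px)$ for all $n\geq 1$. On one hand $P^n(Px)=P^{n+1}x=Q^{n+1}x$, since $x\in G^\perp$ and $n+1\geq 1$; on the other hand, using the $n=1$ identity $Px=Qx$, we get $Q^n(Px)=Q^n(Qx)=Q^{n+1}x$. Hence $Px\in G^\perp$, and since $Qx=Px$ the space $G^\perp$ is invariant under both $P$ and $Q$.

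Next I would establish maximality: any (not necessarily closed) subspace $F$ satisfying (i) and (ii) is contained in $G^\perp$. Indeed, by (i) the restriction $P\vert_F$ maps $F$ into itself, so $(P\vert_F)^n=P^n\vert_F$, and likewise $(Q\vert_F)^n=Q^n\vert_F$; since $P\vert_F=Q\vert_F$ by (ii), raising to the $n$-th power gives $P^n\vert_F=Q^n\vert_F$, that is, $P^nx=Q^nx$ for every $x\in F$ and every $n\geq 1$. Therefore $F\subseteq G^\perp$. Combined with the previous paragraph, this shows that $G^\perp$ is the largest subspace with properties (i) and (ii) (in particular such a largest subspace exists), and the claimed identity $\ess(A)=G$ follows.

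The computations here are routine; the one step deserving care is the invariance of $G^\perp$ under $P$ and $Q$. A priori the conditions $P^nx=Q^nx$ do not visibly force $Px$ back into $G^\perp$, and the point is that the single identity $Px=Qx$ lets one collapse $Q^n(Px)$ into $Q^{n+1}x$, which then matches $P^{n+1}x$. This interplay between the first-power identity and the higher-power ones is what makes $G^\perp$ simultaneously reduce $P$ and $Q$ and keep them equal there, and it is the crux of the argument.
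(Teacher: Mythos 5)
Your proof is correct and follows essentially the same route as the paper: both identify the orthogonal complement of the right-hand side as $\bigcap_{n\geq 1}\ker(\vert A\vert^n-\vert A^*\vert^n)$, verify it satisfies (i) and (ii) via the same collapsing trick (using $\vert A\vert x=\vert A^*\vert x$ together with the higher-power identities), and prove maximality by restricting powers to any competing subspace. The only differences are cosmetic (notation $P,Q$ and your explicit remark that $(G^\perp)^\perp=G$ since $G$ is closed).
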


\begin{proof}
Put
$$
F:= \left(\cup_{n\in\N^*}{\ran(\vert A\vert^n-\vert A^*\vert^n})\right)^\perp =\cap_{n\in\N^*}\ker(\vert A\vert^n-\vert A^*\vert^n).
$$
Clearly, $\vert A\vert_{\vert F}=\vert A^*\vert_{\vert F}$. To show that $F$ is invariant under $\vert A\vert$,  let $x\in F$ and $n\in\N^*$. Since $\vert A\vert x=\vert A^*\vert x$ and $\vert A\vert^{n+1} x=\vert A^*\vert^{n+1} x$, we have
\begin{eqnarray*}
(\vert A\vert^n-\vert A^*\vert^n)\vert A\vert x =\vert A\vert^n\vert A\vert x-\vert A^*\vert^n\vert A\vert x
= \vert A\vert^{n+1}x-\vert A^*\vert^{n+1}x
= 0,
\end{eqnarray*}
and so $\vert A\vert x\in F$. Similarly, we show that $F$ is invariant under $\vert A^*\vert$.

Now, let $L$ be an invariant subspace of $\vert A\vert$ and $\vert A^*\vert$ such that $\vert A\vert_{\vert L}=\vert A^*\vert_{\vert L}$. For every $x\in L$, we have
$$
\vert A\vert^n x =\vert A\vert^n_{\vert L}x=\vert A^*\vert^n_{\vert L}x=\vert A^*\vert^n x\quad\mbox{for all $n\in\N^*$},
$$
that is $x\in F$. Consequently, $L\subset F$. Thus, $\ess(A)=F^\perp$ .
\end{proof}

It should be noted that $\vert A\vert$ and $\vert A^*\vert$ are reduced by $\ess(A)$, and that they coincide on $\ess(A)^{\perp}$.

\begin{theorem}\label{ess}
Let $A\in\lh$. Then $A\in\gsh$ if and only if there exists a conjugation $J$ on $\ess(A)$ such that $J\vert A\vert_{\vert \ess(A)}J=\vert A^*\vert_{\vert \ess(A)}$.
\end{theorem}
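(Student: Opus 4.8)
The plan is to exploit the orthogonal decomposition $H=\ess(A)\oplus F$, where $F:=\ess(A)^\perp$, together with the facts (recorded after the Proposition) that both $\vert A\vert$ and $\vert A^*\vert$ are reduced by $\ess(A)$ and coincide on $F$.

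For the forward implication, assume $A\in\gsh$, so that $C\vert A\vert C=\vert A^*\vert$ for some conjugation $C$. Pre- and post-multiplying by $C$ and using $C^2=I$ gives the intertwining relations $C\vert A\vert=\vert A^*\vert C$ and $C\vert A^*\vert=\vert A\vert C$; iterating yields $C\vert A\vert^n=\vert A^*\vert^n C$ and $C\vert A^*\vert^n=\vert A\vert^n C$, whence $C(\vert A\vert^n-\vert A^*\vert^n)=-(\vert A\vert^n-\vert A^*\vert^n)C$ for every $n\geq1$. This single computation is the crux: it shows that $C$ maps $\ran(\vert A\vert^n-\vert A^*\vert^n)$ into itself, hence into $\ess(A)$, so by continuity of $C$ and the description $\ess(A)=\overline{\sum_{n\geq1}\ran(\vert A\vert^n-\vert A^*\vert^n)}$ we obtain $C(\ess(A))\subseteq\ess(A)$; applying $C^2=I$ then forces $C(\ess(A))=\ess(A)$. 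Consequently $J:=C_{\vert\ess(A)}$ is a conjugation on $\ess(A)$, and since $\vert A\vert$ is reduced by $\ess(A)$, restricting $C\vert A\vert C=\vert A^*\vert$ to $\ess(A)$ gives $J\vert A\vert_{\vert\ess(A)}J=\vert A^*\vert_{\vert\ess(A)}$.

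For the converse, I would assemble a conjugation on all of $H$ as a direct sum. On $F$ the operator $\vert A\vert_{\vert F}=\vert A^*\vert_{\vert F}$ is self-adjoint, hence normal, and therefore complex symmetric, so there is a conjugation $J_0$ on $F$ with $J_0\vert A\vert_{\vert F}J_0=\vert A\vert_{\vert F}=\vert A^*\vert_{\vert F}$. Setting $C:=J\oplus J_0$ relative to $H=\ess(A)\oplus F$ produces a conjugation on $H$, and since $\vert A\vert=\vert A\vert_{\vert\ess(A)}\oplus\vert A\vert_{\vert F}$ and $\vert A^*\vert=\vert A^*\vert_{\vert\ess(A)}\oplus\vert A^*\vert_{\vert F}$ (both being reduced by $\ess(A)$), we get $C\vert A\vert C=(J\vert A\vert_{\vert\ess(A)}J)\oplus(J_0\vert A\vert_{\vert F}J_0)=\vert A^*\vert_{\vert\ess(A)}\oplus\vert A^*\vert_{\vert F}=\vert A^*\vert$. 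Hence $A$ is $C$-normal and lies in $\gsh$.

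The step I expect to be the main obstacle is the forward direction's proof that $C$ preserves $\ess(A)$; the anticommutation identity $C(\vert A\vert^n-\vert A^*\vert^n)=-(\vert A\vert^n-\vert A^*\vert^n)C$ is precisely what makes this work, after which the remaining arguments reduce to the reducibility of $\vert A\vert$ and $\vert A^*\vert$ by $\ess(A)$ and to the complex symmetry of self-adjoint operators.
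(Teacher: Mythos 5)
Your proposal is correct and takes essentially the same route as the paper: the forward direction rests on the same intertwining identities $C\vert A\vert^n=\vert A^*\vert^n C$ and $C\vert A^*\vert^n=\vert A\vert^n C$ (the paper checks that $C$ preserves $\ess(A)^\perp$ via the kernel description $\cap_n\ker(\vert A\vert^n-\vert A^*\vert^n)$, while you check that $C$ preserves $\ess(A)$ via the range description --- dual forms of the same computation), and then restricts $C$ to obtain $J$. Your converse, building $C=J\oplus J_0$ with $J_0$ coming from the complex symmetry of the self-adjoint operator $\vert A\vert_{\vert\ess(A)^\perp}=\vert A^*\vert_{\vert\ess(A)^\perp}$, is identical to the paper's construction.
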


\begin{proof}
Assume that $A$ is $C$-normal for some conjugation $C$ on $H$, and let $x\in \ess(A)^\perp$ and $n\in\N^*$. Then
\begin{eqnarray*}
(\vert A\vert^n-\vert A^*\vert^n)Cx &=&\vert A\vert^n Cx - \vert A^*\vert^n Cx
= C\vert A^*\vert^n x-C\vert A\vert^n x\\
&=&C(\vert A^*\vert^n x - \vert A\vert^n x)=0,
\end{eqnarray*}
and so $C\ess(A)^\perp\subset \ess(A)^\perp$. Since $C^2=I$, we get that $\ess(A)^\perp\subset C\ess(A)^\perp$, and hence $C\ess(A)^\perp = \ess(A)^\perp$. Using the fact that $C$ is an isometry, we get that $C\ess(A)= \ess(A)$.
Hence $J:=C_{\vert \ess(A)}$ is a conjugation on $\ess(A)$ and $J\vert A\vert_{\vert \ess(A)}J=\vert A^*\vert_{\vert \ess(A)}$.

Conversely, let $J$ be a conjugation on $\ess(A)$ such that $J\vert A\vert_{\vert \ess(A)}J=\vert A^*\vert_{\vert \ess(A)}$. Since $\vert A\vert_{\vert \ess(A)^\perp}$ is normal, there is a conjugation $E$ on $\ess(A)^\perp$ such that 
$$E\vert A\vert_{\vert \ess(A)^\perp}E=\vert A\vert_{\vert \ess(A)^\perp}.$$ 
As $\vert A\vert_{\vert \ess(A)^\perp}=\vert A^*\vert_{\vert \ess(A)^\perp}$, we get that $A$ is $C$-normal with respect to the conjugation 
$$
C=\begin{pmatrix}
J & 0\\
0 & E
\end{pmatrix}
\begin{array}{l}
\ess(A)\\
\ess(A)^\perp
\end{array},
$$
which completes the proof.
\end{proof}


It is proved in \cite[Proposition 4.3]{w-2z} that if $T=D\oplus A$ (orthogonal sum) with $D$ being a diagonal operator, then $T$ is $C$-normal if and only if $A$ is $J$-normal, where $C$ and $J$ are conjugations on the underlying Hilbert spaces. This result is extended to normal operators.

\begin{corollary}\label{N+A}
Let $N$ be a normal operator acting on a separable complex Hilbert space $K$, and let $A\in\lh$. Then the orthogonal sum $N\oplus A\in\mathcal{GS}(K\oplus H)$ if and only if $A\in\gsh$.
\end{corollary}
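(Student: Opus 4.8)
The plan is to reduce the whole statement to Theorem \ref{ess} by computing the essential subspace of the orthogonal sum explicitly. Write $T=N\oplus A$ acting on $K\oplus H$. Since the modulus of an orthogonal sum is the orthogonal sum of the moduli (the square root of a block-diagonal positive operator is block diagonal), we have $|T|=|N|\oplus|A|$ and $|T^*|=|N^*|\oplus|A^*|$. The crucial observation is that $N$ being normal forces $|N|=|N^*|$, so for every $n\geq 1$ one gets $|T|^n-|T^*|^n = 0\oplus(|A|^n-|A^*|^n)$, the $K$-summand contributing nothing.

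First I would invoke the Proposition identifying $\ess$ with $\overline{\sum_{n\geq 1}\ran(|T|^n-|T^*|^n)}$. From the previous identity, $\ran(|T|^n-|T^*|^n)=0\oplus\ran(|A|^n-|A^*|^n)$, and taking the closed sum over $n$ yields $\ess(T)=0\oplus\ess(A)$. In particular, under the natural identification of $0\oplus\ess(A)$ with $\ess(A)$, the restrictions satisfy $|T|_{\vert\ess(T)}=|A|_{\vert\ess(A)}$ and $|T^*|_{\vert\ess(T)}=|A^*|_{\vert\ess(A)}$.

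The equivalence then drops out of Theorem \ref{ess} applied to both $T$ and $A$: there is a conjugation $J$ on $\ess(T)$ with $J|T|_{\vert\ess(T)}J=|T^*|_{\vert\ess(T)}$ if and only if there is a conjugation $J$ on $\ess(A)$ with $J|A|_{\vert\ess(A)}J=|A^*|_{\vert\ess(A)}$, since, after the identification of $\ess(T)$ with $\ess(A)$, these two assertions are literally the same. Hence $T\in\mathcal{GS}(K\oplus H)$ if and only if $A\in\gsh$.

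I expect the only genuinely substantive point to be the computation $\ess(N\oplus A)=\ess(A)$, which hinges entirely on the normality of $N$ through $|N|=|N^*|$; everything else is a direct application of the Proposition and of Theorem \ref{ess}. The one piece of care needed is to check that the orthogonal-sum structure is respected when passing to the restrictions, that is, that $\ess(T)$ lives inside the $H$-summand so that the normal part never interferes, but this is immediate from the vanishing of $|N|^n-|N^*|^n$.
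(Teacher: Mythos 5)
Your proposal is correct and follows essentially the same route as the paper: both identify $\ess(N\oplus A)$ with $\ess(A)$ via the vanishing of $\vert N\vert^n-\vert N^*\vert^n$ (normality of $N$) and then conclude by Theorem \ref{ess}. The only cosmetic differences are that you invoke the range-sum formula from the Proposition where the paper computes the orthocomplement as an intersection of kernels, and that you use the equivalence in Theorem \ref{ess} for both implications at once, whereas the paper dismisses the sufficiency as obvious and applies Theorem \ref{ess} only for the necessity.
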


\begin{proof}
The sufficiency is obvious. Assume that $N\oplus A\in\mathcal{GS}(K\oplus H)$. It follows by the previous lemma that there exists a conjugation $J$ on $\ess(N\oplus A)$ such that 
\begin{equation}\label{eq.N+A}
J\vert N\oplus A\vert_{\vert \ess(N\oplus A)}J=\vert N^*\oplus A^*\vert_{\vert \ess(N\oplus A)}.
\end{equation}
Let $x=x_1+x_2$ where $x_1\in K$ and $x_2\in H$. We have
\begin{eqnarray*}
x\in \ess(N\oplus A)^\perp\quad &\Leftrightarrow & \quad \left( \vert N\oplus A\vert^n- \vert N^*\oplus A^*\vert^n\right)(x_1+x_2)=0,\quad\forall n \in\N^*\\
&\Leftrightarrow & \quad \left( \vert N\vert^n \oplus \vert A\vert^n- \vert N\vert^n \oplus \vert A^*\vert^n\right)(x_1+x_2)=0,\quad\forall n \in\N^*\\
&\Leftrightarrow& \quad\left( \vert A\vert^n- \vert A^*\vert^n\right)x_2=0,\quad\forall n \in\N^*\\
&\Leftrightarrow& \quad x\in K\oplus (H\ominus \ess(A)),
\end{eqnarray*}
and so $\ess(N\oplus A)=\ess(A)$. Therefore, by (\ref{eq.N+A}), we get
$$
J\vert A\vert_{\vert \ess(A)}J=\vert A^*\vert_{\vert \ess(A)},
$$
and thus $A\in\gsh$.

\end{proof}

\begin{lemma}\label{cnacn}
Let $A_n\in\lh$ be a sequence of $C_n$-normal operators. If $A_n$ converges to an operator $A\in\lh$, then $C_n\vert A\vert C_n$ converges to $\vert A^*\vert$. 
\end{lemma}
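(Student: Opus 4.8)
The plan is to exploit the $C_n$-normality in the form $C_n\vert A_n\vert C_n=\vert A_n^*\vert$ and then to replace $\vert A_n\vert$ by $\vert A\vert$ and $\vert A_n^*\vert$ by $\vert A^*\vert$ at the cost of errors that vanish in the limit. Reading the convergence $A_n\to A$ as convergence in operator norm, two ingredients will suffice: the norm-continuity of the modulus map $T\mapsto\vert T\vert$ on $\lh$, and the identity $\Vert CTC\Vert=\Vert T\Vert$ valid for every conjugation $C$ and every $T\in\lh$.

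First I would record the continuity of the modulus, which I expect to be the only genuinely analytic point and thus the main obstacle to pin down (or to cite). Since $A_n\to A$ in norm and the sequence $(A_n)$ is bounded, multiplication being norm-continuous on bounded sets gives $A_n^*A_n\to A^*A$ and $A_nA_n^*\to AA^*$ in norm. Because $\Vert A_n^*A_n\Vert=\Vert A_n\Vert^2$ stays bounded, all the spectra involved lie in a fixed compact interval $[0,M]$, on which $t\mapsto\sqrt t$ is uniformly continuous; approximating $\sqrt t$ uniformly by polynomials and using that polynomial functional calculus is norm-continuous under norm convergence, one obtains $\vert A_n\vert=\sqrt{A_n^*A_n}\to\sqrt{A^*A}=\vert A\vert$ and likewise $\vert A_n^*\vert\to\vert A^*\vert$ in norm.

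Next I would verify the conjugation identity: if $C$ is a conjugation and $T\in\lh$, then $CTC$ is linear, and since $C$ is isometric, $\Vert CTCx\Vert=\Vert TCx\Vert\le\Vert T\Vert\,\Vert x\Vert$, so $\Vert CTC\Vert\le\Vert T\Vert$; applying this to $CTC$ and using $C^2=I$ gives the reverse inequality, whence $\Vert CTC\Vert=\Vert T\Vert$. Finally I would combine the two ingredients by writing
$$
C_n\vert A\vert C_n-\vert A^*\vert=C_n(\vert A\vert-\vert A_n\vert)C_n+\bigl(C_n\vert A_n\vert C_n-\vert A^*\vert\bigr),
$$
and invoking $C_n$-normality, $C_n\vert A_n\vert C_n=\vert A_n^*\vert$, so that the second summand is $\vert A_n^*\vert-\vert A^*\vert$. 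Taking norms and applying the conjugation identity to the first summand yields
$$
\Vert C_n\vert A\vert C_n-\vert A^*\vert\Vert\le\Vert\,\vert A\vert-\vert A_n\vert\,\Vert+\Vert\,\vert A_n^*\vert-\vert A^*\vert\,\Vert,
$$
and both terms on the right tend to $0$ by the first step. Hence $C_n\vert A\vert C_n\to\vert A^*\vert$, as required.
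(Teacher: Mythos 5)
Your proof is correct and follows essentially the same route as the paper: the same decomposition $C_n\vert A\vert C_n-\vert A^*\vert=C_n(\vert A\vert-\vert A_n\vert)C_n+(\vert A_n^*\vert-\vert A^*\vert)$ via $C_n$-normality, the triangle inequality, and the isometry property of conjugations. The only difference is that you spell out the norm-continuity of $T\mapsto\vert T\vert$ (via polynomial approximation of $\sqrt{t}$), which the paper uses implicitly as a standard fact.
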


\begin{proof}
We have
\begin{eqnarray*}
\parallel C_n\vert A\vert C_n - \vert A^*\vert\parallel &=&\parallel C_n\vert A\vert C_n - C_n\vert A_n\vert C_n  + C_n\vert A_n\vert C_n - \vert A^*\vert\parallel\\
&\leq &\parallel C_n\vert A\vert C_n - C_n\vert A_n\vert C_n\parallel + \parallel \vert A_n^*\vert  - \vert A^*\vert\parallel\\
&\leq& \parallel \vert A\vert  - \vert A_n\vert \parallel + \parallel \vert A_n^*\vert  - \vert A^*\vert\parallel,
\end{eqnarray*}
and so $C_n\vert A\vert C_n$ converges to $\vert A^*\vert$.
\end{proof}

An operator $A\in\lh$ is called {\it essentially normal} if $A^*A-AA^*$ is a compact operator. Note that the class of essentially normal operators contains all normal operators and is invariant under compact perturbations.

\begin{lemma}\label{cmpctdiff}
If $A\in\lh$ is essentially normal, then the operator $\vert A\vert^m-\vert A^*\vert^m$ is compact for every positive integer $m$.
\end{lemma}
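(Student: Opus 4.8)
The plan is to reduce the whole statement to the single relation $\vert A\vert^2-\vert A^*\vert^2=A^*A-AA^*$, which is compact precisely because $A$ is essentially normal, and then to transport compactness from squares to arbitrary powers by functional calculus. First I would record the elementary polynomial version. Since the compact operators form a norm-closed two-sided ideal of $\lh$, the telescoping identity
\[
(A^*A)^k-(AA^*)^k=\sum_{j=0}^{k-1}(A^*A)^{j}\,(A^*A-AA^*)\,(AA^*)^{k-1-j}
\]
shows that $(A^*A)^k-(AA^*)^k$ is compact for every $k\geq 1$. Consequently $p(A^*A)-p(AA^*)$ is compact for every polynomial $p$, the constant terms cancelling because $(A^*A)^0-(AA^*)^0=0$.

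Next I would pass from polynomials to the fractional power. Both $A^*A$ and $AA^*$ are positive operators whose spectra lie in the common compact interval $[0,\|A\|^2]$ (recall $\|A\|=\|A^*\|$), and the function $f_m(t)=t^{m/2}$ is continuous there. Choosing, by the Weierstrass theorem, polynomials $p_n\to f_m$ uniformly on $[0,\|A\|^2]$, the norm-continuity of the continuous functional calculus for self-adjoint operators yields $p_n(A^*A)\to\vert A\vert^m$ and $p_n(AA^*)\to\vert A^*\vert^m$ in operator norm. Hence $\vert A\vert^m-\vert A^*\vert^m$ is the operator-norm limit of the compact operators $p_n(A^*A)-p_n(AA^*)$, and since $\mathcal{K}(H)$ is norm-closed, $\vert A\vert^m-\vert A^*\vert^m$ is compact, as desired.

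Conceptually this is nothing but the assertion that $A$ is normal modulo the compacts: in the Calkin algebra $\lh/\mathcal{K}(H)$ the image $\pi(A)$ satisfies $\pi(A)^*\pi(A)=\pi(A)\pi(A)^*$, so by uniqueness of the positive square root in a $C^*$-algebra one gets $\vert\pi(A)\vert=\vert\pi(A)^*\vert$, i.e. $\pi(\vert A\vert)=\pi(\vert A^*\vert)$; raising to the $m$-th power then returns the claim immediately. The only point genuinely needing care is that the two positive operators share a common compact spectral interval, so that a single sequence of approximating polynomials works simultaneously for both $A^*A$ and $AA^*$; everything else is routine. I therefore expect no real obstacle beyond bookkeeping, the essential input being exactly the compactness of the self-commutator supplied by essential normality.
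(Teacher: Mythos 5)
Your proof is correct and takes essentially the same route as the paper: uniform polynomial approximation of the relevant fractional power on a set containing both spectra, combined with the fact that the compact operators form a norm-closed two-sided ideal. The only cosmetic difference is that you approximate $t^{m/2}$ directly for each $m$ (and spell out, via the telescoping identity, the step the paper dismisses as ``one can easily check''), whereas the paper first reduces to $m=1$ by a binomial expansion and then approximates $\sqrt{t}$.
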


\begin{proof}
Note that it suffices to establish the result for $m=1$. Indeed, if $\vert A\vert=\vert A^*\vert+K$ with $K$ a compact operator then, for every $m\in\N^*$, we have
$$
\vert A\vert^m=(\vert A^*\vert+K)^m=\vert A^*\vert^m+K_m
$$
for some compact operator $K_m$.

Let $\lbrace P_n\rbrace$ be a sequence of polynomials that converges uniformly to $t\mapsto\sqrt{t}$
on $\sigma(A^*A)\cup\sigma(AA^*)$. Writing $A^*A=AA^*+K$ for some compact operator $K$, one can easily check that $P_n(A^*A)=P_n(AA^*)+K_n$ for some compact operator $K_n$.
Now, as the sequences $P_n(A^*A)$ and $P_n(AA^*)$ converge respectively to $\vert A\vert$ and $\vert A^*\vert$, we infer that $K_n$ converges to some compact operator. Thus,  $\vert A\vert-\vert A^*\vert$ is compact.
\end{proof}

From \cite{capro}, we recall the following useful result.

\begin{lemma}\cite[Corollary 7.2]{capro}
Let $A\in\lh$ be compact, and let $\lbrace C_n\rbrace$ be a sequence of conjugations on $H$ such that $C_n AC_n+A^*$ converges to zero as $n$ tends to infinity. If $P$ denotes the projection onto $\overline{\ran(A)+\ran(A^*)}$, then there exists a subsequence $\lbrace n_j\rbrace_j$ such that ${PC_{n_j}}_{\vert\ran(P)}$ converges to a conjugation on $\ran(P)$.
\end{lemma}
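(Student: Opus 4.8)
The plan is to use the compactness of $A$ to localise the action of each $C_n$ to finite-dimensional singular subspaces, and then to extract a convergent subsequence by a diagonal argument. First I would reformulate the hypothesis. Since $\| C_nAC_n+A^*\|\to 0$ and each $C_n$ is an isometric involution, multiplying on the right by $C_n$ yields the equivalent relation $\| C_nA+A^*C_n\|\to 0$, and taking adjoints gives also $\| C_nA^*+AC_n\|\to 0$. Hence $C_nA^*AC_n=(C_nA^*C_n)(C_nAC_n)\to AA^*$ in norm, that is $C_n\vert A\vert^2C_n\to\vert A^*\vert^2$. Because $C_nTC_n\geq 0$ whenever $T\geq 0$ and $C_n\sqrt{T}C_n$ is a positive square root of $C_nTC_n$, one has $C_n\vert A\vert C_n=\sqrt{C_n\vert A\vert^2C_n}$, so the operator-norm continuity of the square root on positive operators with uniformly bounded spectrum gives $C_n\vert A\vert C_n\to\vert A^*\vert$; symmetrically $C_n\vert A^*\vert C_n\to\vert A\vert$.

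Next comes the localisation. Writing the singular value decomposition $A=\sum_k s_k\,(u_k\otimes v_k)$ with $s_k>0$, the family $\lbrace v_k\rbrace$ is an orthonormal eigenbasis of $\vert A\vert$ and $\lbrace u_k\rbrace$ one of $\vert A^*\vert$, while $\ran(P)=\overline{\span\lbrace u_k,v_k\rbrace}$. Applying $C_n\vert A\vert C_n\to\vert A^*\vert$ to the vector $C_nv_k$ and using $C_n^2=I$ gives $\vert A^*\vert(C_nv_k)=C_n\vert A\vert v_k+o(1)=s_k\,C_nv_k+o(1)$, so $C_nv_k$ is an approximate eigenvector of $\vert A^*\vert$ for the eigenvalue $s_k$. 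As $A$ is compact, $s_k$ is an isolated eigenvalue of finite multiplicity, so $C_nv_k$ lies asymptotically in the fixed finite-dimensional space $\ker(\vert A^*\vert-s_k)=\span\lbrace u_m:s_m=s_k\rbrace$, and symmetrically $C_nu_k$ lies asymptotically in $\ker(\vert A\vert-s_k)$. Since these spaces are finite-dimensional and $\|C_nv_k\|=\|C_nu_k\|=1$, a diagonal extraction produces a subsequence $\lbrace n_j\rbrace$ along which $C_{n_j}v_k\to v_k^\infty$ and $C_{n_j}u_k\to u_k^\infty$ in norm for every $k$, with $v_k^\infty,u_k^\infty\in\ran(P)$.

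Finally I would identify the limit and check that it is a conjugation. Define $J$ on the spanning vectors by $Jv_k:=v_k^\infty$ and $Ju_k:=u_k^\infty$. Passing the identities $\langle C_{n_j}v_k,C_{n_j}v_l\rangle=\langle v_l,v_k\rangle$, $\langle C_{n_j}u_k,C_{n_j}u_l\rangle=\langle u_l,u_k\rangle$ and $\langle C_{n_j}u_k,C_{n_j}v_l\rangle=\langle v_l,u_k\rangle$ to the limit shows that $J$ preserves all inner products conjugate-linearly, hence is well defined and extends to a conjugate-linear isometry of $\ran(P)$; since $\|PC_{n_j}\|\leq 1$ and $PC_{n_j}\to J$ on the spanning set, $PC_{n_j}{}_{\vert\ran(P)}\to J$ in the strong operator topology. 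For involutivity, write $u_k^\infty=\sum_{s_m=s_k}c_mv_m$, so that $C_{n_j}u_k^\infty\to\sum_m\overline{c_m}v_m^\infty=Ju_k^\infty$; combining this with $\|C_{n_j}u_k^\infty-u_k\|\leq\|u_k^\infty-C_{n_j}u_k\|\to 0$ (using $C_{n_j}^2=I$ and isometry) forces $Ju_k^\infty=u_k$, i.e. $J^2u_k=u_k$, and likewise $J^2v_k=v_k$. Thus $J^2=I$ and $J$ is a conjugation on $\ran(P)$.

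The main obstacle is the localisation step: because $\ran(P)$ may be infinite-dimensional and the two orthonormal systems $\lbrace u_k\rbrace$ and $\lbrace v_k\rbrace$ need not be mutually orthogonal, the $C_n$ cannot be diagonalised directly, and it is precisely the compactness of $A$ — pinning each $C_nv_k$ into a fixed finite-dimensional eigenspace of $\vert A^*\vert$ for the same singular value — that makes the diagonal extraction feasible. The subordinate technical points are the upgrade from $C_nAC_n\to -A^*$ to $C_n\vert A\vert C_n\to\vert A^*\vert$, which rests on the continuity of the square root, and the verification that isometry and involution survive in the strong limit.
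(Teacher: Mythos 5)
The paper never proves this lemma: it is imported verbatim from \cite[Corollary 7.2]{capro} and used as a black box in the proof of Lemma~\ref{essnor}, so there is no internal proof to compare against. What you have written is therefore a self-contained substitute for the citation, and it is essentially correct. Your route --- upgrade the hypothesis to $C_n\vert A\vert C_n\to\vert A^*\vert$ and $C_n\vert A^*\vert C_n\to\vert A\vert$ via continuity of the square root, localise $C_nv_k$ into the finite-dimensional eigenspace $\ker(\vert A^*\vert-s_k)$ using that a nonzero singular value of a compact operator is an isolated eigenvalue of finite multiplicity, extract a diagonal subsequence, and check that the pointwise limit is an isometric involution --- is sound, and in fact yields slightly more than the stated conclusion: full strong convergence $C_{n_j}x\to Jx$ for every $x\in\ran(P)$, not merely convergence of $PC_{n_j}x$. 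That stronger form is exactly what the paper's application inside Lemma~\ref{essnor} needs (there it must be recovered from the stated form via $\Vert(I-P)C_{n_j}x\Vert^2=\Vert C_{n_j}x\Vert^2-\Vert PC_{n_j}x\Vert^2\to 0$). Note also that your first step duplicates the paper's Lemma~\ref{cnacn}, and your verification that a strong limit of conjugations is a conjugation mirrors the computation carried out inside Lemma~\ref{essnor}; the genuinely new ingredient, and the heart of the matter, is the localisation step, which is indeed where compactness enters.

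Two small repairs are needed. First, the sentence ``taking adjoints gives $\Vert C_nA^*+AC_n\Vert\to 0$'' does not work as written: since the adjoint of the conjugate-linear operator $C_nA$ is $A^*C_n$, the operator $C_nA+A^*C_n$ is its own adjoint, so taking adjoints returns the same relation. Instead, take the adjoint of the original \emph{linear} operator $C_nAC_n+A^*$, using $(C_nAC_n)^*=C_nA^*C_n$, to get $C_nA^*C_n+A\to 0$; this is the relation your product computation $C_nA^*AC_n=(C_nA^*C_n)(C_nAC_n)\to AA^*$ actually requires. Second, the phrase ``lies asymptotically in $\ker(\vert A^*\vert-s_k)$'' deserves the one-line spectral-gap estimate that justifies it: if $Q_k$ is the spectral projection of $\vert A^*\vert$ associated with $\lbrace s_k\rbrace$ and $\delta_k>0$ is the distance from $s_k$ to the rest of $\sigma(\vert A^*\vert)$, then
$$
\Vert (I-Q_k)C_nv_k\Vert\leq \delta_k^{-1}\Vert (\vert A^*\vert-s_k)C_nv_k\Vert\leq\delta_k^{-1}\,\Vert \vert A^*\vert-C_n\vert A\vert C_n\Vert\to 0,
$$
where the middle step uses $C_n\vert A\vert C_n(C_nv_k)=s_kC_nv_k$. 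With these two points inserted, your proof is complete.
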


The proof of the following lemma is inspired by \cite[Theorem 7.3]{capro}.

\begin{lemma}\label{essnor}
Let $A\in\lh$ be essentially normal. Then $A\in\gsh$ if and only if $A\in\overline{\gsh}$. 
\end{lemma}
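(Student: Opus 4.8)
The forward implication is immediate, since $\gsh\subseteq\overline{\gsh}$. For the converse, suppose $A\in\overline{\gsh}$ and choose $C_n$-normal operators $A_n$ with $A_n\to A$. Since $A_n$ is $C_n$-normal exactly when $A_n^*$ is, I would apply Lemma \ref{cnacn} to both sequences to get $C_n\vert A\vert C_n\to\vert A^*\vert$ and $C_n\vert A^*\vert C_n\to\vert A\vert$. As each $C_n$ is a conjugation, $C_n\vert A\vert^m C_n=(C_n\vert A\vert C_n)^m$, so passing to $m$-th powers yields $C_n\vert A\vert^m C_n\to\vert A^*\vert^m$ and $C_n\vert A^*\vert^m C_n\to\vert A\vert^m$ for every $m\ge 1$. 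Writing $T_m:=\vert A\vert^m-\vert A^*\vert^m$, which is compact by Lemma \ref{cmpctdiff} because $A$ is essentially normal, this gives the sign-reversal relation $C_nT_mC_n+T_m\to0$ for all $m$.

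By Theorem \ref{ess} it suffices to produce a conjugation $J$ on $\ess(A)$ with $J\vert A\vert_{\vert\ess(A)}J=\vert A^*\vert_{\vert\ess(A)}$. Let $P$ be the projection onto $\ess(A)$. From the Proposition, $\ess(A)^\perp=\cap_k\ker T_k$ and $\overline{\sum_m\ran T_m}=\ess(A)$; since each $T_m$ is self-adjoint and $\ess(A)^\perp\subseteq\ker T_m$, one gets $T_m=PT_mP$. The plan is to extract, along a subsequence, a norm-limit of the compressions $PC_nP$ that is a genuine conjugation on \emph{all} of $\ess(A)$, by feeding into \cite[Corollary 7.2]{capro} compact operators whose range closure exhausts $\ess(A)$.

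Here lies the main obstacle, and it is a genuine tension. A positive combination such as $\sum_m c_m T_m^2$ has range closure exactly $\ess(A)$ (its kernel is $\cap_m\ker T_m$), but conjugation by $C_n$ preserves its sign, so it violates the hypothesis $C_nKC_n+K\to0$ of the corollary; conversely the indefinite sums $K_N:=\sum_{m\le N}c_mT_m$ reverse sign, so $C_nK_NC_n+K_N\to0$, yet their range may collapse through cancellation in the kernel. I would reconcile these by working with finite sums: for each $N$, a suitably generic choice of real coefficients $c_1,\dots,c_N$ forces $\ker K_N=\cap_{m\le N}\ker T_m$, hence $\overline{\ran K_N}=\overline{\sum_{m\le N}\ran T_m}=:\ess_N$, an increasing family with $\bigvee_N\ess_N=\ess(A)$. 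Applying \cite[Corollary 7.2]{capro} to each self-adjoint compact $K_N$ gives a subsequence along which $P_N C_nP_N$ converges to a conjugation $J_N$ on $\ess_N$ ($P_N$ being the projection onto $\ess_N$), and a diagonal argument then yields one subsequence $(n_j)$ and a conjugation $J$ on $\ess(A)$ with $J_{\vert\ess_N}=J_N$ and $PC_{n_j}P\to J$ strongly. Establishing the finite-dimensional genericity and the compatibility of the $J_N$ is the technical heart of the argument.

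Finally I would check that this $J$ intertwines. Since $J$ is isometric, $PC_{n_j}Px\to Jx$ while $\Vert C_{n_j}x\Vert=\Vert x\Vert$, so $\Vert(I-P)C_{n_j}x\Vert^2=\Vert x\Vert^2-\Vert PC_{n_j}x\Vert^2\to0$ for $x\in\ess(A)$; thus $(I-P)C_{n_j}P\to0$ strongly. As $\ess(A)$ reduces $\vert A\vert$, we have $\vert A\vert=P\vert A\vert P+(I-P)\vert A\vert(I-P)$, whence on $\ess(A)$
$$
PC_{n_j}\vert A\vert C_{n_j}P=(PC_{n_j}P)\big(P\vert A\vert P\big)(PC_{n_j}P)+PC_{n_j}\big((I-P)\vert A\vert(I-P)\big)C_{n_j}P,
$$
where the second summand tends to $0$ strongly by the previous remark and the first tends strongly to $J\,\vert A\vert_{\vert\ess(A)}\,J$. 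The left-hand side converges in norm to $P\vert A^*\vert P=\vert A^*\vert_{\vert\ess(A)}$ because $C_{n_j}\vert A\vert C_{n_j}\to\vert A^*\vert$. Comparing limits gives $J\vert A\vert_{\vert\ess(A)}J=\vert A^*\vert_{\vert\ess(A)}$, and Theorem \ref{ess} then yields $A\in\gsh$.
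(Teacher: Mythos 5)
Your overall architecture is the paper's: Lemma \ref{cnacn} applied to $A_n$ and $A_n^*$, passing to powers to get $C_nT_mC_n+T_m\to 0$ with $T_m=\vert A\vert^m-\vert A^*\vert^m$ compact (Lemma \ref{cmpctdiff}), then \cite[Corollary 7.2]{capro} plus a diagonal argument to produce a conjugation on $\ess(A)$, and Theorem \ref{ess} to conclude; your closing verification that the limit conjugation intertwines $\vert A\vert$ and $\vert A^*\vert$ on $\ess(A)$ is sound. But there is a genuine gap exactly where you locate ``the technical heart'': the claim that, for each $N$, a generic choice of real coefficients forces $\ker\big(\sum_{m\le N}c_mT_m\big)=\cap_{m\le N}\ker T_m$. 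You do not prove it, and as a statement about self-adjoint compact operators it is simply false, because symmetric pencils can be singular. For instance, on $\C^3$ take
$$
T_1=\begin{pmatrix}0&0&0\\0&0&1\\0&1&0\end{pmatrix},\qquad T_2=\begin{pmatrix}0&0&1\\0&0&0\\1&0&0\end{pmatrix};
$$
then $\ker T_1\cap\ker T_2=\lbrace 0\rbrace$, while $(c_1,-c_2,0)$ lies in $\ker(c_1T_1+c_2T_2)$ for \emph{every} choice of $c_1,c_2$, so no choice of coefficients recovers the intersection of the kernels. Your plan would therefore have to exploit the special form $T_m=\vert A\vert^m-\vert A^*\vert^m$, and no such argument is given.

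Moreover, the ``tension'' that led you to this detour is illusory: nothing requires feeding \cite[Corollary 7.2]{capro} a single compact operator whose range closure exhausts $\ess(A)$. The paper applies the corollary to each $T_m$ separately, obtaining nested subsequences such that ${P_m C_{m,n}}_{\vert\ran(P_m)}$ converges to a conjugation on $\overline{\ran(T_m)}$, where $P_m$ is the projection onto $\overline{\ran(T_m)}$, and then diagonalizes to get one subsequence $\lbrace C_{n_j}\rbrace$ working for all $m$ simultaneously. The isometry argument you already use (namely $\Vert (I-P_m)C_{n_j}x\Vert\to 0$ for $x\in\ran(P_m)$) shows that $C_{n_j}x$ converges in norm for every $x$ in each $\overline{\ran(T_m)}$, hence on their linear span $M_0$ --- which need not be an increasing union of anything --- and hence, since the $C_{n_j}$ are isometric, on the closure $M=\ess(A)$ (this identification is the Proposition). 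The compatibility of the partial limits, the other point you flag as difficult, is then automatic: all of them are pointwise limits of the \emph{same} sequence $C_{n_j}$. So if you replace your operators $K_N$ by the individual $T_m$'s, the unprovable genericity step disappears and your argument closes up into the paper's proof.
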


\begin{proof}
Assume that there is a sequence $A_n$ of $C_n$-normal operators that converges to $A$. Then, by Lemma \ref{cnacn}, $C_n\vert A\vert C_n$ and $C_n\vert A^*\vert C_n$ converge to $\vert A^*\vert$ and $\vert A\vert$, respectively. Hence, if $m$ is a positive integer, we obtain that
$$
C_n(\vert A\vert^m-\vert A^*\vert^m)C_n+(\vert A\vert^m-\vert A^*\vert^m)^*=(C_n\vert A\vert C_n)^m-\vert A^*\vert^m -(C_n\vert A^*\vert C_n)^m+\vert A\vert^m
$$
converges to zero as $n$ tends to infinity.

As $\vert A\vert^m-\vert A^*\vert^m$ is compact by Lemma \ref{cmpctdiff}, the previous lemma ensures the existence of a subsequence $\lbrace C_{m,n}\rbrace$ of $\lbrace C_n\rbrace$ so that ${P_m C_{m,n}}_{\vert\ran(P_m)}$ converges to a conjugation on $\ran(P_m)$ where $P_m$ denotes the projection onto $\overline{\ran(\vert A\vert^m-\vert A^*\vert^m)}$. Moreover, the subsequences can be chosen so that 
$$\{C_{m+1,n} : n\geq 1\}\subseteq \{C_{m,n} : n\geq 1\} \mbox{ for all } m\geq 1.$$
Applying the diagonal process argument, we obtain a subsequence $\lbrace C_{n_j}\rbrace$ such that ${P_mC_{n_j}}_{\vert\ran(P_m)}$ converges, for every $m\geq 1$, to a conjugation on $\ran(P_m)$ as $j$ tends to infinity. So if $M_0$ denotes the linear span
of all subspaces $\ran(P_m)$, $m\geq 1$, then for every $x\in M_0$, the sequence $C_{n_j}x$ converges to a vector in $M_0$. Furthermore, since $\lbrace C_{n_j}\rbrace$ is a bounded sequence, we infer that for every vector $x\in M$, the topological closure of $M_0$, $C_{n_j}x$ converges to a vector $C_Mx\in M$.

Now let us show that the conjugate-linear operator $C_M$ is a conjugation on $M$. For all $x,y\in M$, we have
$$
\langle C_Mx,C_My\rangle=\lim_{n_j}\langle C_{n_j}x,C_{n_j}y\rangle=\lim_{n_j}\langle y,x\rangle=\langle y,x\rangle,
$$
meaning that $C_M$ is isometric. Moreover, we have
\begin{eqnarray*}
\| C_M^2x -x\|=\| C_MC_Mx-x\| &=&\lim_{n_j}\|C_{n_j}C_Mx-x\|\\
&=&\lim_{n_j}\|C_{n_j}C_{n_j}C_Mx-C_{n_j}x\|\\
&=& \| C_M x-\lim_{n_j}C_{n_j}x\|=0,
\end{eqnarray*}
and so $C_M$ is a conjugation on $M$.

Note that $M=\ess(A)$. Then, for every $x\in M$, we have
\begin{eqnarray*}
\parallel\vert A\vert_{\vert M} C_Mx - C_M\vert A^*\vert_{\vert M}x\parallel &=& \parallel\vert A\vert C_Mx - C_M\vert A^*\vert x\parallel \\
&=&\lim_{n_j}\parallel \vert A\vert C_{n_j}x- C_{n_j}\vert A^*\vert x\parallel\\
&=& \lim_{n_j}\parallel C_{n_j}\vert A\vert C_{n_j}x- \vert A^*\vert x\parallel=0,
\end{eqnarray*}
whence $C_M\vert A\vert_{\vert M} C_Mx=\vert A^*\vert_{\vert M}$. Thus, $A\in\gsh$ by Theorem  \ref{ess}.
\end{proof}

The following lemma is a special case of Theorem \ref{main.t} for which the  normal operator is invertible.

\begin{lemma}
Let $N\in\lh$ be an invertible normal operator, and let $x$ and $y$ be vectors in $H$ such that
$$
\langle NE_{\vert N\vert}(\Delta)x,x\rangle=\langle NE_{\vert N\vert}(\Delta)y,y\rangle\quad\mbox{and}\quad\langle E_{\vert N\vert}(\Delta)x,x\rangle=\langle E_{\vert N\vert}(\Delta)y,y\rangle
$$
for every Borel subset $\Delta\subset\R^+$. Then
$$
N+\lambda y\otimes x \in\gsh\quad\mbox{for every $\lambda\in\C.$}
$$
\end{lemma}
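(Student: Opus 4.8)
The plan is to reduce matters, fiber by fiber, to the unitary Lemma \ref{conj.angl}, and to use the essential normality of the perturbation to pass to a limit via Lemma \ref{essnor} (which spares us any measurable-selection/direct-integral machinery). The case $\lambda=0$ is trivial, since then $N+\lambda y\otimes x=N$ is normal and lies in $\gsh$. For $\lambda\neq 0$, set $A:=N+\lambda y\otimes x$. As a rank-one perturbation of a normal operator, $A^*A-AA^*$ has finite rank, so $A$ is essentially normal; by Lemma \ref{essnor} it therefore suffices to show $A\in\overline{\gsh}$, i.e. to approximate $A$ in norm by members of $\gsh$.

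Since $N$ is invertible and normal, I would use its polar decomposition $N=U\vert N\vert$ with $U$ unitary commuting with $\vert N\vert$, write $E(\Delta):=E_{\vert N\vert}(\Delta)$, and note $\sigma(\vert N\vert)\subseteq[\delta,\Vert N\Vert]$ for some $\delta>0$. The crucial preliminary is to upgrade hypothesis (i): writing $\langle NE(\Delta)x,x\rangle=\int_\Delta s\,d\langle UE(s)x,x\rangle$, and likewise for $y$, hypothesis (i) says the complex measures $s\,d\langle UE(s)x,x\rangle$ and $s\,d\langle UE(s)y,y\rangle$ agree on every Borel set and hence coincide; dividing by $s\geq\delta>0$ (this is exactly where invertibility is used) yields
$$
\langle UE(\Delta)x,x\rangle=\langle UE(\Delta)y,y\rangle\qquad\mbox{for every Borel }\Delta\subseteq\R^+.
$$
Together with (ii) (which also gives $\Vert x\Vert=\Vert y\Vert$ on taking $\Delta=\R^+$), these are the two identities I will feed into Lemma \ref{conj.angl}.

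Next I would approximate $\vert N\vert$ by simple functions. Partitioning $[\delta,\Vert N\Vert]$ into Borel pieces $\Delta_1,\dots,\Delta_m$ of small diameter with $t_k\in\Delta_k$ and $E_k:=E(\Delta_k)$, put $\vert N\vert^{(m)}=\sum_k t_kE_k$ and $N^{(m)}=U\vert N\vert^{(m)}$; then $N^{(m)}$ is normal and invertible, $N^{(m)}\to N$ in norm, so $A^{(m)}:=N^{(m)}+\lambda y\otimes x\to A$. On each reducing block $H_k=E_kH$ the operator $U_k:=U_{\vert H_k}$ is unitary and $N^{(m)}_{\vert H_k}=t_kU_k$. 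From (ii) I get $\Vert E_kx\Vert=\Vert E_ky\Vert$, and from the reformulated identity $\langle U_kE_kx,E_kx\rangle=\langle UE_kx,x\rangle=\langle UE_ky,y\rangle=\langle U_kE_ky,E_ky\rangle$; thus Lemma \ref{conj.angl} supplies a conjugation $C_k$ on $H_k$ with $C_k(E_kx)=E_ky$ and $C_kU_k(E_kx)=U_k^*(E_ky)$ (equivalently $C_kU_k^*(E_ky)=U_k(E_kx)$). Setting $C=\bigoplus_k C_k$ gives a conjugation on $H$ that commutes with $\vert N^{(m)}\vert$ and satisfies $Cx=y$ and $C(N^{(m)*}y)=N^{(m)}x$ (the last using that $C$ commutes with the invertible $\vert N^{(m)}\vert$ together with $CU^*y=Ux$).

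Finally I would verify $C$-normality of $A^{(m)}$ by direct expansion. Both
$$
A^{(m)*}A^{(m)}=\vert N^{(m)}\vert^2+\lambda(N^{(m)*}y)\otimes x+\bar\lambda\,x\otimes(N^{(m)*}y)+\vert\lambda\vert^2\Vert y\Vert^2\,x\otimes x
$$
and the analogous formula for $A^{(m)}A^{(m)*}$ (with $Nx$, $y$, and coefficients $\bar\lambda,\lambda$) are computed, and one checks that conjugating the former by $C$ reproduces the latter. The step I expect to require the most care — and the conceptual heart of the argument — is precisely this matching: $C$ conjugates the scalar coefficients, so that $C[\lambda\,u\otimes v]C=\bar\lambda\,(Cu)\otimes(Cv)$, and it is this scalar conjugation, combined with $Cx=y$, $C(N^{(m)*}y)=N^{(m)}x$, and $\Vert x\Vert=\Vert y\Vert$, that makes the two rank-two perturbations coincide for \emph{every} $\lambda\in\C$ rather than only for real $\lambda$. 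This gives $A^{(m)}\in\gsh$ for each $m$, whence $A\in\overline{\gsh}=\gsh$ by Lemma \ref{essnor}.
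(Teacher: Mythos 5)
Your proposal is correct, and it shares the paper's skeleton --- partition $\sigma(\vert N\vert)$ into pieces where $\vert N\vert$ is nearly scalar, apply Lemma \ref{conj.angl} on each spectral block, take the direct sum of the resulting conjugations, verify $C$-normality of the approximant by expanding the rank-one terms, and conclude via essential normality and Lemma \ref{essnor} --- but the way you exploit the hypotheses and the invertibility of $N$ is genuinely different. The paper never proves your upgraded identity $\langle UE_{\vert N\vert}(\Delta)x,x\rangle=\langle UE_{\vert N\vert}(\Delta)y,y\rangle$. Instead, on each block it applies Lemma \ref{conj.angl} to the \emph{weighted} vectors $\sqrt{\vert N_i\vert}x_i$ and $\sqrt{\vert N_i\vert}y_i$ (where $x_i=E_{\vert N\vert}(\Delta_i)x$): for these, condition (i) yields the inner-product hypothesis with no division at all, but the norm hypothesis $\Vert\sqrt{\vert N_i\vert}x_i\Vert=\Vert\sqrt{\vert N_i\vert}y_i\Vert$ then requires a separate argument from condition (ii) (simple-function approximation of $\sqrt{\vert N_i\vert}$), and the approximating operator becomes $M+\lambda b\otimes a$ with \emph{both} the normal part and the vectors perturbed ($a=\oplus_i\alpha_i^{-1/2}\sqrt{\vert N_i\vert}x_i$, etc.); invertibility enters there through the bound $\alpha_i^{-1/2}\leq\sqrt{\Vert N^{-1}\Vert}$ and a square-root Lipschitz estimate away from $0$, needed to get $\Vert a-x\Vert,\Vert b-y\Vert\leq\varepsilon$. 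You instead spend invertibility once, at the level of measures: since $\sigma(\vert N\vert)\subseteq[\delta,\Vert N\Vert]$, the function $1/s$ is bounded and Borel there, so the complex measures $\Delta\mapsto\langle UE_{\vert N\vert}(\Delta)x,x\rangle$ and $\Delta\mapsto\langle UE_{\vert N\vert}(\Delta)y,y\rangle$, whose $s$-weighted versions coincide by (i), are themselves equal. After that, $x$ and $y$ never move; only $N$ is discretized, Lemma \ref{conj.angl} applies directly to $E_kx,E_ky$, and the only error to control is $\Vert N-N^{(m)}\Vert$. Your route is cleaner (a single source of error instead of three, no weighted vectors, no norm-equality detour), while the paper's route stays entirely within operator-theoretic norm estimates and never invokes countably additive complex measures or division by a density. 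Two small points of hygiene: invertibility of $\vert N^{(m)}\vert$ plays no role in the identity $C(N^{(m)*}y)=N^{(m)}x$ (commutation of $C$ with $\vert N^{(m)}\vert$ together with $CU^*y=Ux$ suffices); and your closing ``$A\in\overline{\gsh}=\gsh$'' should be phrased as ``$A\in\overline{\gsh}$, hence $A\in\gsh$ by Lemma \ref{essnor}'', since that lemma equates the two memberships only for essentially normal operators, not as classes.
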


\begin{proof}
Fix $\lambda\in\C$ and $\varepsilon>0$. As
$$
\| x\|^2=\langle E_{\vert N\vert}(\R^+)x,x\rangle=\langle E_{\vert N\vert}(\R^+)y,y\rangle=\| y\|^2,
$$
there is no loss of generality in assuming that $x$ and $y$ are unit vectors. Indeed, if $x=y=0$ then $N+\lambda y\otimes x=N\in\gsh$, otherwise we replace $\lambda$ by $\lambda\| x\|^{-2}$ and $x$ and $y$ by $\| x\|^{-1}x$ and $\| x\|^{-1}y$, respectively.

Using the spectral theorem, we obtain the existence of a set of pairwise disjoint intervals $\Delta_i\subset\R^+$ and positive numbers $\alpha_i \in \sigma(\vert N\vert)$ such that
$$
\parallel \vert N_i\vert-\alpha_i I \parallel \leq \varepsilon
$$
where $N_i=N_{\vert \ran(E_{\vert N\vert}(\Delta_i))}$, $1 \leq i \leq n$, and
$$
I=E_{\vert N\vert}(\Delta_1)+E_{\vert N\vert}(\Delta_2)+...+E_{\vert N\vert}(\Delta_n).
$$
Furthermore, since
$$
\vert \sqrt{x}-\sqrt{y}\vert\leq \sqrt{\| N^{-1}\|^{-1}}\vert x-y\vert\quad\mbox{for all $x,y\geq \min\sigma(\vert N\vert)$},
$$
we get by the functional calculus that
\begin{equation}\label{sqrtinneq}
\|\sqrt{\vert N_i\vert} - \sqrt{\alpha_i}I \| \leq \sqrt{\| N^{-1}\|^{-1}} \parallel\vert N_i\vert - \alpha_iI \parallel \leq \varepsilon\sqrt{\| N^{-1}\|^{-1}}
\end{equation}
for $1\leq i\leq n$. By the polar decomposition, $N_i=U_i\vert N_i\vert$ where $U_i$ is a unitary operator acting on $\ran(E_{\vert N\vert}(\Delta_i))$ and commuting with $\vert N_i\vert$.  

Let $i$ be an integer such that $1\leq i\leq n$, and write $x_i=E_{\vert N\vert}(\Delta_i)x$ and $y_i=E_{\vert N\vert}(\Delta_i)y$. Let us show that there exists a conjugation $C_i$ on $\ran(E_{\vert N\vert}(\Delta_i))$ such that $C_i \sqrt{\vert N_i\vert} x_i=\sqrt{\vert N_i\vert} y_i$ and $C_i U_i\sqrt{\vert N_i\vert} x_i= U_{i}^*\sqrt{\vert N_i\vert}y_i$. By Lemma \ref{conj.angl}, it suffices to establish the following equalities
$$\langle U_i\sqrt{\vert N_i\vert} x_i,\sqrt{\vert N_i\vert}x_i\rangle=\langle U_i\sqrt{\vert N_i\vert} y_i,\sqrt{\vert N_i\vert}y_i\rangle\quad \mbox{and}\quad 
\| \sqrt{\vert N_i\vert} x_i\|=\| \sqrt{\vert N_i\vert} y_i\|.$$
We have 
\begin{eqnarray*}
\langle U_i\sqrt{\vert N_i\vert} x_i,\sqrt{\vert N_i\vert}x_i\rangle &=&\langle \sqrt{\vert N_i\vert}U_i\sqrt{\vert N_i\vert} x_i,x_i\rangle 
=  \langle U_i\sqrt{\vert N_i\vert}\sqrt{\vert N_i\vert} x_i,x_i\rangle\\
&=& \langle U_i \vert N_i\vert x_i,x_i\rangle
= \langle  N_i x_i,x_i\rangle
= \langle  N E_{\vert N\vert}(\Delta_i)x,x\rangle,
\end{eqnarray*}
and similarly, $\langle U_i\sqrt{\vert N_i\vert} y_i,\sqrt{\vert N_i\vert}y_i\rangle=\langle  N E_{\vert N\vert}(\Delta_i)y,y\rangle$. On the other hand, since $\langle NE_{\vert N\vert}(\Delta_i)x,x\rangle=\langle NE_{\vert N\vert}(\Delta_i)y,y\rangle$, we have also
$$
\langle U_i\sqrt{\vert N_i\vert} x_i,\sqrt{\vert N_i\vert}x_i\rangle=\langle U_i\sqrt{\vert N_i\vert} y_i,\sqrt{\vert N_i\vert}y_i\rangle.
$$
To show the second equality, consider disjoint intervals $\Omega_j$ of $\R^+$, and let $\beta_j$ be positive numbers. Note that if we denote by
$
\Omega_j^2:=\lbrace t^2 : t\in\Omega_j\rbrace,
$
then it is elementary to see that $E_{A^2}(\Omega_j^2) = E_A(\Omega_j)$ for every positive operator $A$. Therefore,
\begin{eqnarray*}
\|\sum_{j} \beta_j E_{\sqrt{\vert N_i\vert}}(\Omega_j)x_i\|^2 &=&\sum_{j} \beta_j^2 \| E_{\sqrt{\vert N_i\vert}}(\Omega_j)x_i\|^2
=\sum_{j} \beta_j^2 \| E_{ \vert N_i\vert}(\Omega_j^2)x_i\|^2\\
&=&\sum_{j} \beta_j^2\langle E_{ \vert N_i\vert}(\Delta_i\cap\Omega_j^2)x_i,x_i\rangle
= \sum_{j} \beta_j^2\langle E_{ \vert N\vert}(\Delta_i\cap\Omega_j^2)x,x\rangle\\
&=& \sum_{j} \beta_j^2\langle E_{ \vert N\vert}(\Delta_i\cap\Omega_j^2)y,y\rangle
= \|\sum_{j} \beta_j E_{\sqrt{\vert N_i\vert}}(\Omega_j)y_i\|^2.
\end{eqnarray*}
Since $\sqrt{\vert N_i\vert}$ is limit of operators of the form $\sum \beta_j E_{\sqrt{\vert N_i\vert}}(\Omega_j)$, we obtain that $\| \sqrt{\vert N_i\vert} x_i\|=\| \sqrt{\vert N_i\vert} y_i\|$, as desired.

Put
$$
M=\begin{pmatrix}
\alpha_1 U_1 & & &\\
& \alpha_2 U_2 & &\\
& & \ddots & \\
& & & \alpha_n U_n 
\end{pmatrix}
\begin{array}{l}
\ran(E_{\vert N\vert}(\Delta_1))\\
\ran(E_{\vert N\vert}(\Delta_2))\\
\vdots\\
\ran(E_{\vert N\vert}(\Delta_n))
\end{array},
$$
$$
a=\oplus_{i=1}^n \alpha_i^{-1/2}\sqrt{\vert N_i\vert} x_i,\quad b=\oplus_{i=1}^n \alpha_i^{-1/2}\sqrt{\vert N_i\vert} y_i\quad\mbox{and}\quad C=\oplus_{i=1}^n C_i.
$$
We have $\|a\|=\| b\|$, and one can easily check that $C$ is a conjugation on $H$ that satisfies 
$$
CMM^*C=M^*M,\quad Cb=a\quad \mbox{and}\quad CM a=M^*b.
$$ 
Hence, it follows that
\begin{eqnarray*}
C(M+\lambda b\otimes a)(M+\lambda b\otimes a)^*C &=& C(M+\lambda b\otimes a)(M^*+\overline{\lambda} a\otimes b)C\\
&=& C( MM^* + \overline{\lambda}Ma\otimes b + \lambda b\otimes Ma
\\
& & +\vert\lambda\vert^2\| a\|^2 b\otimes b)C\\
&=&  CMM^*C + \lambda C Ma\otimes Cb + \overline{\lambda} Cb\otimes CMa\\
& & +  \vert\lambda\vert^2\| b\|^2 Cb\otimes Cb\\
&=& M^*M + \lambda M^*b\otimes a +  \overline{\lambda} a\otimes M^*b\\
& & + \vert\lambda\vert^2\| b\|^2 a\otimes a\\
&=& (M+\lambda b\otimes a)^*(M+\lambda b\otimes a),
\end{eqnarray*}
meaning that the operator $M+\lambda b\otimes a$ is $C$-normal. Moreover, we have
\begin{eqnarray*}
\| a-x\|=\parallel (\oplus_{i=1}^n \alpha_i^{-1/2}\sqrt{\vert N_i\vert}) x - x \parallel &\leq & \| (\oplus_{i=1}^n \alpha_i^{-1/2}\sqrt{\vert N_i\vert}) - I \|\\
&\leq & \max_{1\leq i\leq n}\| \alpha_i^{-1/2}\sqrt{\vert N_i\vert} - I \|\\
&\leq & \max_{1\leq i\leq n}\alpha_i^{-1/2}\| \sqrt{\vert N_i\vert} - \sqrt{\alpha_i}I \|,
\end{eqnarray*}
and so we obtain, by (\ref{sqrtinneq}), that
$$
\| a-x\|\leq \varepsilon\sqrt{\| N^{-1}\|^{-1}}\max_{1\leq i\leq n}\sqrt{\alpha_i^{-1}}\leq \varepsilon\sqrt{\parallel N^{-1}\parallel}\sqrt{\| N^{-1}\|^{-1}}=\varepsilon.
$$
Similarly, we get that $\| b-y\|\leq\varepsilon$. Thus
\begin{eqnarray*}
\| N+\lambda y\otimes x - (M + \lambda b\otimes a)\| &\leq & \| N-M\| +\vert\lambda\vert.\| y\otimes x-b\otimes a\|\\
&\leq & \varepsilon + \vert\lambda\vert\left(\| y\otimes x-y\otimes a\|+\| y\otimes a-b\otimes a\|\right)\\
&\leq & \varepsilon + \vert\lambda\vert \left(\| x-a\|+\| y-b\|.\| a\| \right)\\
&\leq & \varepsilon + \vert\lambda\vert \left(\| x-a\|+\| y-b\|(\| x-a\|+1) \right)\\
&\leq & \varepsilon +\varepsilon\vert\lambda\vert+\varepsilon^2\vert\lambda\vert+\varepsilon\vert\lambda\vert.
\end{eqnarray*}
As $\varepsilon$ is arbitrary, we obtain that $N+\lambda y\otimes x\in\overline{\gsh}$. Hence, since the operator $N+\lambda y\otimes x$ is essentially normal, Lemma \ref{essnor} implies that $N+\lambda y\otimes x\in\gsh$.
\end{proof}

We are now in a position to prove Theorem \ref{main.t}.

\begin{proof}[Proof of Theorem \ref{main.t}]
In view of the previous lemma, we need only to consider the case that $N$ is not invertible. Let $n$ be a positive integer, and let $R_n$ be the set given by
$$
R_n=\lbrace z\in\C : \vert z\vert > n^{-1}\rbrace.
$$
 Consider the operator $N_n$ given by
$$
N_n=\begin{pmatrix}
N_{\vert \ran(E_N(R_n))} & 0\\
0 & (2n)^{-1}
\end{pmatrix}
\begin{array}{l}
\ran(E_N(R_n))\\
\ran(E_N(\C\setminus R_n))
\end{array}.
$$
Then $N_n$ is an invertible normal operator. Let $\Delta\subset \R^+$ be a Borel subset. We have
\begin{eqnarray*}
\langle N_n E_{\vert N_n\vert}(\Delta) x,x\rangle &=&\left\langle N_n \left(E_{\vert N_n\vert}(\Delta\cap R_n)+E_{\vert N_n\vert}(\Delta\cap(\C\setminus R_n)\right) x,x\right\rangle\\
&=& \left\langle N_n E_{\vert N_n\vert}(\Delta\cap R_n) x,x\rangle+\langle N_n E_{\vert N_n\vert}(\Delta\cap(\C\setminus R_n)) x,x\right\rangle\\
&=&\left\langle N E_{\vert N\vert}(\Delta\cap R_n) x,x\right\rangle+(2n)^{-1} \left\langle E_{\vert N_n\vert}(\Delta\cap\lbrace (2n)^{-1}\rbrace) x,x\right\rangle.
\end{eqnarray*}
But, $E_{\vert N_n\vert}(\Delta\cap\lbrace (2n)^{-1}\rbrace)=E_{\vert N\vert}(\C\setminus R_n)$ if $(2n)^{-1}\in\Delta$, and $E_{\vert N_n\vert}(\Delta\cap\lbrace (2n)^{-1}\rbrace)=0$ otherwise. Hence
\begin{eqnarray*}
\langle N_n E_{\vert N_n\vert}(\Delta) x,x\rangle &=&\langle N E_{\vert N\vert}(\Delta\cap R_n) y,y\rangle+(2n)^{-1}\langle E_{\vert N_n\vert}(\Delta\cap\lbrace (2n)^{-1}\rbrace) y,y\rangle\\
&=& \langle N_n E_{\vert N_n\vert}(\Delta) y,y\rangle.
\end{eqnarray*}
Similarly, we obtain that
$
\langle E_{\vert N_n\vert}(\Delta) x,x\rangle=\langle E_{\vert N_n\vert}(\Delta) y,y\rangle.
$
Hence, by the previous lemma, $N_n+\lambda y\otimes x\in\gsh$ for every $\lambda\in\C$.

Finally, since
$$
N-N_n=\begin{pmatrix}
0 & 0\\
0 & N_{\vert \ran(E_N(\C\setminus R_n))} - (2n)^{-1}
\end{pmatrix}
$$
and 
$$
\| N-N_n\|=\|  N_{\vert \ran(E_N(\C\setminus R_n))} - (2n)^{-1}\|\leq \|  N_{\vert \ran(E_N(\C\setminus R_n))}\|+(2n)^{-1}\leq n^{-1}+(2n)^{-1} =3(2n)^{-1},
$$
Lemma \ref{essnor} implies that $N+\lambda y\otimes x\in\gsh$, which completes the proof.
\end{proof}


\begin{thebibliography}{99} 

\bibitem{A.O}{\sc Z. Amara and M. Oudghiri}, {\it Finite rank perturbations of complex symmetric operators}, J. Math. Anal. Appl. {\bf 495} (2021), 124720.


\bibitem{GaPu.symmI}{\sc S.R. Garcia and M. Putinar}, {\it Complex symmetric operators and applications}, Trans. Amer. Math. Soc. {\bf 358} (2006), 1285-1315.

\bibitem{GaPu.symmII}{\sc S.R. Garcia and M. Putinar}, {\it Complex symmetric operators and applications II}, Trans. Amer. Math. Soc. {\bf 359} (2007), 3913-3931.

\bibitem{gapoten}{\sc S.R. Garcia, D.E. Poore and J.E. Tener}, {\it Unitary equivalence to a complex symmetric matrix: low dimensions}, Linear Algebra Appl. {\bf 437} (2012), 271-284.

\bibitem{gar-wog1} {\sc S. R. Garcia and W. R. Wogen}, {\it Complex symmetric partial isometries}, J. Funct. Anal. {\bf 257} (2009), 1251-1260.


\bibitem{Ga.some}{\sc S.R. Garcia and W.R. Wogen}, {\it Some new classes of complex symmetric operators}, Trans. Amer. Math. Soc. {\bf 362} (2010), 6065-6077.

\bibitem{capro}{\sc K. Guo, Y. Ji and S. Zhu}, {\it A $C^*$-algebra approach to complex symmetric operators}, Trans. Amer. Math. Soc. {\bf 367} (2015), 6903-6942.


\bibitem{Cnor1}{\sc E. Ko, J.E. Lee and M-J. Lee}, {\it On properties of C-normal operators}, Banach J. Math. Anal. {\bf 15} (2021), 65.


\bibitem{wicher}{\sc M. Ptak, K. Simik and A. Wicher}, {\it C-normal operators}, Electron. J. Linear Algebra {\bf 36} (2020), 67-79.

\bibitem{Cnor2}{\sc G. Ramesh, B.S. Ranjan and D.V. Naidu}, {\it Cartesian decomposition of C-normal operators}, Linear Multilinear Algebra (2021), to appear.


\bibitem{w-2z}{\sc C. Wang, J. Zhao and S. Zhu}, {\it Remarks on the structure of C-normal operators}, Linear Multilinear Algebra (2020), to appear.

\bibitem{zhu.guang}{\sc S. Zhu and C.G. Li}, {\it Complex Symmetry of a Dense Class of Operators}, Integral Equations Operator Theory {\bf 73} (2012), 255-272.


\end{thebibliography}
\end{document}